\title{Kawamata-Viehweg Vanishing on 
Rational Surfaces in Positive Characteristic}
\author{Qihong Xie}
\date{}
\theoremstyle{plain}
\newtheorem{prop}{Proposition}[section]
\newtheorem{lem}[prop]{Lemma}
\newtheorem{thm}[prop]{Theorem}
\newtheorem{cor}[prop]{Corollary}
\theoremstyle{definition}
\newtheorem{defn}[prop]{Definition}
\newtheorem*{ack}{Acknowledgments}
\newtheorem*{nota}{Notation}
\theoremstyle{remark}
\newtheorem{rem}[prop]{Remark}
\newcommand{\Q}{\mathbb Q}
\newcommand{\Z}{\mathbb Z}
\newcommand{\N}{\mathbb N}
\newcommand{\F}{\mathbb F}
\newcommand{\A}{\mathbb A}
\newcommand{\PP}{\mathbb P}
\newcommand{\OO}{\mathcal O}
\newcommand{\II}{\mathcal I}
\newcommand{\LL}{\mathcal L}
\newcommand{\TT}{\mathcal T}
\newcommand{\Supp}{\mathop{\rm Supp}\nolimits}
\newcommand{\Exc}{\mathop{\rm Exc}\nolimits}
\newcommand{\ch}{\mathop{\rm char}\nolimits}
\newcommand{\Hom}{\mathop{\rm Hom}\nolimits}
\newcommand{\Spec}{\mathop{\rm Spec}\nolimits}
\newcommand{\Proj}{\mathop{\bf Proj}\nolimits}
\newcommand{\ext}{\mathop{\rm Ext}\nolimits}
\newcommand{\ra}{\rightarrow}
\newcommand{\wt}{\widetilde}
\begin{document}
\maketitle

\begin{abstract}
We prove that the Kawamata-Viehweg vanishing theorem holds on 
rational surfaces in positive characteristic by means of the 
lifting property to $W_2(k)$ of certain log pairs on smooth 
rational surfaces. As a corollary, the Kawamata-Viehweg vanishing 
theorem holds on log del Pezzo surfaces in positive characteristic.
\end{abstract}

\setcounter{section}{0}
\section{Introduction}\label{S1}

There are many generalizations of the celebrated Kodaira vanishing theorem. 
One of the most important generalizations is the Kawamata-Viehweg vanishing 
theorem. As is well known, it is inevitable to run the higher dimensional 
minimal model program in the categories of varieties with suitable 
singularities, hence we have to consider $\Q$-divisors instead of integral 
divisors. It turns out that the Kawamata-Viehweg vanishing theorem is 
indispensable and plays a crucial role in birational geometry of higher 
dimensional algebraic varieties.

The Kawamata-Viehweg vanishing theorem is of several forms. The one dealing 
with ample $\Q$-divisors follows directly from the Kodaira vanishing theorem 
via the Kummer covering trick \cite{ka82,vi}.

\begin{thm}[Kawamata-Viehweg vanishing]\label{1.1}
Let $X$ be a smooth projective variety over an algebraically 
closed field $k$ with $\ch(k)=0$, and $H$ an ample $\Q$-divisor 
on $X$ such that the fractional part $\langle H\rangle$ has simple 
normal crossing support. Then $H^i(X,K_X+\ulcorner H\urcorner)=0$ 
holds for any $i>0$.
\end{thm}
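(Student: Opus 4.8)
The plan is to reduce the statement to the classical Kodaira vanishing theorem for integral ample divisors by means of the Kummer covering trick, the point being to convert the fractional part $\langle H\rangle$ into an integral divisor after pulling back along a suitable finite cover. First I would write $H=\llcorner H\lrcorner+\langle H\rangle$ and choose a positive integer $N$ such that $N\langle H\rangle$ is an integral divisor and $NH$ is an ample Cartier divisor; by hypothesis the reduced divisor $D=\Supp\langle H\rangle$ has simple normal crossing support. Applying Kawamata's covering lemma to $D$ and the integer $N$, I obtain a finite surjective morphism $\pi\colon Y\ra X$ from a smooth projective variety $Y$ such that every multiplicity occurring in $\pi^{*}D_j$ is divisible by $N$ for each component $D_j$ of $D$. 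Consequently $\pi^{*}\langle H\rangle$ is integral, and hence $\pi^{*}H=\pi^{*}\llcorner H\lrcorner+\pi^{*}\langle H\rangle$ is an integral divisor on $Y$ which, being the pullback of an ample $\Q$-divisor under a finite morphism, is ample.

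Next I would transport the vanishing across $\pi$. Writing $K_Y=\pi^{*}K_X+R$ with $R\ge 0$ the ramification divisor, the projection formula gives $\pi_{*}\OO_Y(K_Y+\pi^{*}H)=\OO_X(K_X+\llcorner H\lrcorner)\otimes\pi_{*}\OO_Y(R+\pi^{*}\langle H\rangle)$, and since $\pi$ is finite we have $R^{q}\pi_{*}=0$ for $q>0$, so that $H^{i}\big(X,\pi_{*}\OO_Y(K_Y+\pi^{*}H)\big)\cong H^{i}\big(Y,\OO_Y(K_Y+\pi^{*}H)\big)$. Because $\pi^{*}H$ is an ample integral divisor on the smooth projective variety $Y$, Kodaira vanishing yields $H^{i}(Y,\OO_Y(K_Y+\pi^{*}H))=0$ for all $i>0$. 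It therefore remains to show that $\OO_X(K_X+\ulcorner H\urcorner)$ is a direct summand of $\pi_{*}\OO_Y(K_Y+\pi^{*}H)$, equivalently that $\OO_X(\ulcorner\langle H\rangle\urcorner)$ is a direct summand of $\pi_{*}\OO_Y(R+\pi^{*}\langle H\rangle)$; granting this, the desired group $H^{i}(X,K_X+\ulcorner H\urcorner)$ is a direct summand of a vanishing group and hence is itself zero.

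The main obstacle is precisely this splitting and the identification of the correct summand. Since $\ch(k)=0$, the cover $\pi$ is assembled from cyclic covers whose Galois groups have order prime to the characteristic, so $\pi_{*}\OO_Y(R+\pi^{*}\langle H\rangle)$ decomposes into eigensheaves under the Galois action and in particular $\OO_X$ splits off the invariant part via the averaging idempotent. The task is then the standard local computation on each component $D_j$: if $\pi^{*}D_j=N\tilde D_j$, so that the ramification contributes $(N-1)\tilde D_j$ while $\pi^{*}\langle H\rangle$ contributes $b_j\tilde D_j$ with $\langle H\rangle=\sum_j\frac{b_j}{N}D_j$ and $0<b_j<N$, then the invariant summand of $\pi_{*}\OO_Y(R+\pi^{*}\langle H\rangle)$ is $\OO_X\big(\sum_j\llcorner\frac{N-1+b_j}{N}\lrcorner D_j\big)=\OO_X(\ulcorner\langle H\rangle\urcorner)$, the round-up entering through the elementary identity $\llcorner\frac{N-1+b_j}{N}\lrcorner=1=\ulcorner\frac{b_j}{N}\urcorner$. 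Two further points also rely on $\ch(k)=0$: the smoothness of $Y$ in Kawamata's covering lemma, which uses Bertini to choose the auxiliary branch divisors general and smooth, and the appeal to Kodaira vanishing on $Y$ in the previous step. I expect the careful bookkeeping of this eigensheaf decomposition, rather than any single hard estimate, to be the delicate part of the argument.
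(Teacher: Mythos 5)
Your proposal is correct and is exactly the argument the paper has in mind: Theorem \ref{1.1} is quoted there as a known result that ``follows directly from the Kodaira vanishing theorem via the Kummer covering trick'' of Kawamata and Viehweg, which is precisely your reduction via Kawamata's covering lemma, the Galois eigensheaf splitting of $\pi_*\OO_Y(K_Y+\pi^*H)$, and Kodaira vanishing on the cover. The only cosmetic imprecision is that the ramification divisor $R$ also has components over the auxiliary general branch divisors used in the covering construction, but these contribute $\bigl[\tfrac{N-1}{N}\bigr]=0$ to the invariant summand, so your identification of the invariant part with $\OO_X(K_X+\ulcorner H\urcorner)$ is unaffected.
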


The most general form is stated for log pairs which have only Kawamata log 
terminal singularities \cite[Theorem 1-2-5]{kmm}.

\begin{thm}[Kawamata-Viehweg vanishing]\label{1.2}
Let $X$ be a normal projective variety over an algebraically closed 
field $k$ with $\ch(k)=0$, $B=\sum b_iB_i$ an effective $\Q$-divisor 
on $X$, and $D$ a $\Q$-Cartier Weil divisor on $X$. 
Assume that $(X,B)$ is Kawamata log terminal (KLT for short), 
and $D-(K_X+B)$ is ample. Then $H^i(X,D)=0$ holds for any $i>0$.
\end{thm}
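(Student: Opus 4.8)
The plan is to reduce Theorem \ref{1.2} to the smooth, ample case of Theorem \ref{1.1} by passing to a log resolution and descending the vanishing along it. First I would take a log resolution $f\colon Y\ra X$ of the pair $(X,B)$, which exists because $\ch(k)=0$; thus $Y$ is smooth and the reduced divisor $\sum_iE_i$ consisting of $\Exc(f)$ together with the strict transform of $\Supp(B)$ has simple normal crossing support. The discrepancy formula reads $K_Y=f^*(K_X+B)+\sum_ia_iE_i$, and the KLT hypothesis gives $a_i>-1$ for every $i$. Set $A:=D-(K_X+B)$, which is ample, so $f^*A$ is nef and big. Since $D$ is an integral Weil divisor and $f$ is an isomorphism in codimension one along each non-exceptional prime divisor, $f^*D$ has integral coefficients there, so the fractional part of $f^*D$ is supported on $\Exc(f)$. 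Finally, write $-\sum_ia_iE_i=\Theta-\Delta^{-}$ with $\Theta,\Delta^{-}\ge0$ having no common components, where $\Theta$ (the part with $a_i<0$) has coefficients in $[0,1)$ by the KLT condition, and $\Delta^{-}$ (the part with $a_i>0$) is effective and $f$-exceptional.

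Next I would introduce the integral divisor $L:=\ulcorner f^*D+\Delta^{-}\urcorner$ on $Y$ and verify two things. On the one hand, $L-f^*D$ is the sum of $\Delta^{-}$ and a round-up correction supported on $\Exc(f)$, hence effective and $f$-exceptional, so $f_*\OO_Y(L)=\OO_X(D)$; combined with relative Kawamata--Viehweg vanishing $R^qf_*\OO_Y(L)=0$ for $q>0$, valid in characteristic zero, the Leray spectral sequence yields $H^i(X,D)\cong H^i(Y,L)$ for all $i$. On the other hand, substituting $f^*D=K_Y+\Theta-\Delta^{-}+f^*A$ gives the identity $L=K_Y+\ulcorner f^*A+\Theta\urcorner$, where $f^*A$ is nef and big and $\Theta$ is a simple normal crossing boundary with coefficients in $[0,1)$. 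Everything is therefore reduced to the smooth vanishing $H^i(Y,K_Y+\ulcorner f^*A+\Theta\urcorner)=0$.

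It remains to prove this last statement, which is the genuine Kawamata--Viehweg vanishing on the smooth variety $Y$. Because $f^*A$ is only nef and big, whereas Theorem \ref{1.1} requires an \emph{ample} divisor, two further reductions are needed. First, a cyclic (Kummer) covering $\pi\colon Y'\ra Y$ ramified to high order along the components of $\Theta$ clears the fractional boundary: after resolving $Y'$, the covering group acts on cohomology and realizes $H^i(Y,K_Y+\ulcorner f^*A+\Theta\urcorner)$ as a direct summand of $H^i(Y',K_{Y'}+\MM)$ for a nef and big line bundle $\MM$ arising from $f^*A$, thereby reducing to the case with no boundary. Second, for a nef and big integral class one invokes Kodaira's lemma to write it $\sim_{\Q}$ an ample divisor plus an effective divisor $E'$, and runs an induction on dimension, peeling off the components $F$ of $E'$ through the short exact sequences $0\ra\OO_{Y'}(L'-F)\ra\OO_{Y'}(L')\ra\OO_F(L')\ra0$ attached to the current divisor $L'$, the restrictions to $F$ being handled by the inductive hypothesis in lower dimension, until the base case of an ample divisor is reached, where Theorem \ref{1.1} applies.

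I expect the main obstacle to be precisely this passage from the ample hypothesis of Theorem \ref{1.1} to the nef and big setting forced by the birational pullback $f^*A$: both the covering construction (ensuring $Y'$ can be taken smooth, that the fractional parts are genuinely cleared, and that the target cohomology survives as a direct summand) and the subsequent dimension induction demand careful bookkeeping of round-ups and of simple normal crossing support. By contrast, the descent along $f$ is comparatively formal once relative Kawamata--Viehweg vanishing is granted, the only delicate point being that $L-f^*D$ is effective and $f$-exceptional, which is exactly what the KLT inequalities $a_i>-1$ guarantee.
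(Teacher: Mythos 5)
The paper never proves Theorem \ref{1.2}: it is quoted from \cite[Theorem 1-2-5]{kmm}. The relevant internal comparison is therefore with the paper's proof of Theorem \ref{1.4}, which runs exactly the reduction you set up. Your first two steps are correct and coincide with that argument: the log resolution, the identity $L=\ulcorner f^*D+\Delta^{-}\urcorner=K_Y+\ulcorner f^*A+\Theta\urcorner$, the projection formula $f_*\OO_Y(L)=\OO_X(D)$, and the descent via relative vanishing plus Leray (the paper's counterpart of your relative vanishing input is Lemma \ref{3.1}, and its counterpart of your $L$ is $D_Y$).

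The genuine gap is in your last step, and the idea you are missing is precisely the one the paper uses. Because you keep the nef and big class $f^*A$ as it is, you are forced to prove Kawamata--Viehweg vanishing for nef and big divisors from scratch, and your sketch of that does not work. Kodaira's lemma gives $m\MM\sim A'+E'$ for some $m\geq 1$, i.e.\ a relation involving the \emph{multiple} $m\MM$; subtracting components of $E'$ from $K_{Y'}+\MM$ one at a time therefore never terminates at $K_{Y'}+(\hbox{ample})$ --- after removing all of $E'$ one has $\MM-E'\sim A'-(m-1)\MM$, which carries no positivity --- so the proposed induction has no base case. Moreover the components $F$ of $E'$ are arbitrary prime divisors, possibly singular and non-normal, and $L'|_F$ need not remain nef and big, so ``the inductive hypothesis in lower dimension'' does not apply to them. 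The standard repair, and what the paper does in proving Theorem \ref{1.4}, is a perturbation that restores ampleness so that only the ample case (Theorem \ref{1.1}, resp.\ Theorem \ref{1.3}) is ever invoked: choose suitable $0<\delta_i\ll 1$ so that $h^*\bigl(D-(K_X+B)\bigr)-\sum_i\delta_iE_i$ is ample (pullback of an ample divisor minus a suitable small effective exceptional combination is ample), and absorb $\sum_i\delta_iE_i$ into the fractional boundary $B_Y$; since the $\delta_i$ are small, the round-up $D_Y$ and the KLT/simple normal crossing properties are unchanged. The same one-line trick also replaces your covering-plus-peeling scheme in general: with $m\MM\sim A'+E'$ and $0<\epsilon\ll 1$, all coefficients of $\frac{\epsilon}{m}E'$ lie in $[0,1)$, so $\ulcorner \MM-\frac{\epsilon}{m}E'\urcorner=\MM$ while $\MM-\frac{\epsilon}{m}E'=(1-\epsilon)\MM+\frac{\epsilon}{m}A'$ is ample; after one further log resolution making the supports simple normal crossing, Theorem \ref{1.1} applies directly. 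Without this perturbation step, your argument ends at a statement strictly stronger than Theorem \ref{1.1} for which no valid proof is given.
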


The original proof of the Kodaira vanishing theorem was analytic, and its 
purely algebraic proof was first given by Deligne and Illusie \cite{di}. 
For a smooth proper variety $X$ over a perfect field $k$ of characteristic 
$p>0$, they have defined the notion of a lifting of $X$ to $W_2(k)$, 
the ring of Witt vectors of length two of $k$, and have proved that 
if $X$ admits a lifting to $W_2(k)$ and $\dim X<p$, then the de Rham 
complex is decomposable in derived category, consequently the Hodge 
to de Rham spectral sequence degenerates in $E_1$, and the 
Kodaira-Akizuki-Nakano vanishing theorem holds on $X$. 
The characteristic zero case can be deduced from the 
characteristic $p$ case by standard arguments \cite[\S 6]{il}. 
Furthermore, they have also claimed the corresponding results 
\cite[\S 4.2]{di} in the logarithmic case for a log pair $(X,D)$, 
where $X$ is a smooth proper variety and $D\subset X$ is a simple 
normal crossing divisor over $k$. The explicit statements and proofs 
of those results were given by Esnault and Viehweg \cite[\S 8-\S11]{ev}. 
In particular, if $(X,D)$ admits a lifting to $W_2(k)$, then the 
logarithmic Kodaira-Akizuki-Nakano vanishing theorem holds on $X$.

Later, Hara \cite{hara} and Matsuki and Olsson \cite{mo} independently 
proved the Kawamata-Viehweg vanishing theorem in positive characteristic 
under the lifting condition  to $W_2(k)$ of certain log pairs. The method 
of Hara is to use a quasi-isomorphism between the logarithmic de Rham 
complex and its variant by adding certain modulo $p$ fractional parts, 
while Matsuki and Olsson replaced the Kummer covering trick with 
the stack technique, which behaves well in arbitrary characteristic, 
and interpreted the Kawamata-Viehweg vanishing on varieties as 
the Kodaira vanishing on stacks.

\begin{thm}[Kawamata-Viehweg vanishing in char.\ $p>0$]\label{1.3}
Let $k$ be a perfect field of characteristic $p>0$, $X$ a smooth 
projective variety over $k$ of dimension $d$, $H$ an ample 
$\Q$-divisor on $X$, and $D$ a simple normal crossing divisor 
containing $\Supp(\langle H\rangle)$. 
Assume that $(X,D)$ admits a lifting to $W_2(k)$. Then 
\[ H^i(X,\Omega_X^j(\log D)(-\ulcorner H\urcorner))=0 \,\,\,\,
\hbox{holds for any} \,\,\,\, i+j<\inf(d,p). \]
In particular, $H^i(X,K_X+\ulcorner H\urcorner)=0$ holds 
for any $i>d-\inf(d,p)$.
\end{thm}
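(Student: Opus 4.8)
The plan is to run the logarithmic Deligne--Illusie machinery on a tame cyclic cover that transfers the ampleness of $H$ upstairs and isolates the twist $-\ulcorner H\urcorner$ in a single eigenspace; note that $\ulcorner H\urcorner$ itself need not be ample, which is exactly why a direct application of Akizuki--Nakano vanishing on $X$ does not suffice. Assuming first that the denominators of $H$ are prime to $p$, I would fix an integer $N\ge 2$ with $p\nmid N$ clearing them, so that $N\langle H\rangle=\sum_i a_iD_i$ is integral with $0\le a_i<N$ and all $D_i\subseteq D$, while $NH$ is very ample and integral. Choosing a general $G\in|NH|$ with $D+G$ still simple normal crossing and setting $\mathcal L=\mathcal O_X(\ulcorner H\urcorner)$, one has $\mathcal L^{N}\cong\mathcal O_X(\Delta)$ with $\Delta=G+\sum_i a_iD_i$, and the corresponding section builds a degree-$N$ cyclic cover $\pi\colon Y\to X$. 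Because $p\nmid N$ the cover is tame; after torically resolving the cyclic-quotient singularities along the multiple components of $\Delta$ (a characteristic-free, $W_2(k)$-liftable operation), I obtain a smooth simple normal crossing pair $(Y,\bar D)$, with $\bar D=(\pi^{*}(D+G))_{\mathrm{red}}$, and the $W_2(k)$-lifting of $(X,D+G)$ propagates to one of $(Y,\bar D)$.

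\textbf{The Deligne--Illusie engine.} Since $NH\sim G$ is ample and $\pi$ is totally ramified over $G$, the reduced ramification $\tilde G$ satisfies $\pi^{*}(NH)\sim N\tilde G$, so $\mathcal O_Y(\tilde G)$ is an ample line bundle on $Y$. Now I would invoke the logarithmic decomposition of Deligne--Illusie (Esnault--Viehweg): the lifting to $W_2(k)$ splits $\tau_{<p}F_{*}\Omega^{\bullet}_{Y}(\log\bar D)$ in the derived category as $\bigoplus_{j<p}\Omega^{j}_{Y'}(\log\bar D')[-j]$. Combining this splitting with the projection formula $F_{*}\bigl(\Omega^{\bullet}_Y(\log\bar D)\bigr)\otimes\mathcal N\simeq F_{*}\bigl(\Omega^{\bullet}_Y(\log\bar D)\otimes F^{*}\mathcal N\bigr)$ and Serre vanishing for high powers of the ample bundle yields the logarithmic Akizuki--Nakano vanishing $H^{i}\bigl(Y,\Omega^{j}_Y(\log\bar D)\otimes\mathcal A^{-1}\bigr)=0$ for every ample $\mathcal A$ and all $i+j<\inf(d,p)$.

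\textbf{Descent and conclusion.} Decomposing the $\mu_N$-action on $\pi_{*}$ of the preceding groups into isotypic pieces, the standard eigensheaf formula for tame cyclic covers identifies the relevant character's contribution with $H^{i}\bigl(X,\Omega^{j}_X(\log(D+G))(-\ulcorner H\urcorner)\bigr)$; the integer twist $-\ulcorner H\urcorner$ appears because the floor corrections $\lfloor\ell\Delta/N\rfloor$ vanish on this eigenspace. Hence this group vanishes for $i+j<\inf(d,p)$. The spurious logarithmic poles along the auxiliary divisor $G$ are then removed by induction on $\dim X$: the residue sequence $0\to\Omega^{j}_X(\log D)\to\Omega^{j}_X(\log(D+G))\to\Omega^{j-1}_G(\log(D|_G))\to0$, twisted by $\mathcal O_X(-\ulcorner H\urcorner)$, reduces the extra term to logarithmic vanishing on $G$, where $H|_G$ remains ample and the inductive hypothesis applies. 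This gives $H^{i}(X,\Omega^{j}_X(\log D)(-\ulcorner H\urcorner))=0$ for $i+j<\inf(d,p)$. Finally, the ``in particular'' assertion is the case $j=0$, namely $H^{i}(X,\mathcal O_X(-\ulcorner H\urcorner))=0$ for $i<\inf(d,p)$, to which Serre duality $H^{i}(X,\mathcal O_X(-\ulcorner H\urcorner))^{\vee}\cong H^{d-i}(X,\omega_X(\ulcorner H\urcorner))$ is applied, giving $H^{m}(X,K_X+\ulcorner H\urcorner)=0$ for all $m>d-\inf(d,p)$.

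\textbf{The main obstacle.} Everything above rests on the condition $p\nmid N$. When $p$ divides a denominator of $H$, the cyclic cover is wildly ramified, $Y$ fails to be resolvable by the tame toric recipe, and the $W_2(k)$-lifting of $(Y,\bar D)$ --- the linchpin of the argument --- is no longer available. This is precisely the difficulty that Hara resolves by replacing the geometric cover with a quasi-isomorphism between $\Omega^{\bullet}_X(\log D)$ and the variant complex obtained by adjoining the modulo-$p$ fractional parts, and that Matsuki--Olsson bypass by substituting a tame Deligne--Mumford stack for the cover. Handling this wild case correctly, rather than the tame reduction sketched above, is where the genuine work of the theorem lies.
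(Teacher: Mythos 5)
Your proposal is not a complete proof of Theorem \ref{1.3}, and the decisive gap is the one you yourself concede in your final paragraph. The theorem is stated for an \emph{arbitrary} ample $\Q$-divisor $H$: there is no hypothesis that the denominators of $H$ are prime to $p$. Your entire construction requires $p\nmid N$, so what you have sketched is at best the tame special case, and you explicitly defer the wild case to the methods of Hara \cite{hara} (the quasi-isomorphism between the logarithmic de Rham complex and its variant with modulo-$p$ fractional parts) and Matsuki--Olsson \cite{mo} (Kawamata--Viehweg vanishing as Kodaira vanishing on stacks). But that deferred case \emph{is} the theorem: those two techniques were invented precisely because the characteristic-zero Kummer covering trick you are transplanting does not survive wild ramification. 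For context, the paper itself does not reprove this statement either; it quotes it from \cite{hara} and \cite{mo}, so the standard your proposal must meet is that of those references, not of the char-$0$ argument.

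Even within the tame case, the linchpin assertion --- that the $W_2(k)$-lifting of $(X,D)$ ``propagates'' to a lifting of $(Y,\bar D)$ --- is stated without proof, and it is not formal. To lift the auxiliary divisor $G$ you must first lift the invertible sheaf $\OO_X(NH)$ to $\wt{X}$; the obstruction lies in $H^2(X,\OO_X)$, which has no reason to vanish for a general smooth projective variety of dimension $d$. (This is exactly the step in Lemma \ref{2.3} that uses $H^2(H_m,\OO_{H_m})=0$, and it is one reason the paper's liftability results, Proposition \ref{2.6}, are confined to rational surfaces.) You must then lift the section cutting out $G$ (obstruction in $H^1(X,\OO_X(NH))$, curable by Serre vanishing only after enlarging $N$), then lift the cyclic cover $Y$, and then lift a resolution of its quotient singularities; invoking the liftability of blow-ups as in Lemma \ref{2.4} is circular at this point, since the centers to be blown up lie on $Y$, which has not yet been lifted. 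So both halves of your argument are incomplete: the tame reduction rests on unestablished lifting claims, and the wild case --- the actual content of Theorem \ref{1.3} --- is absent.
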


The lifting condition to $W_2(k)$, together with the reduction modulo 
$p$ technique, is usually used to prove some statements in characteristic 
zero. However, the lifting condition is indeed a very strong condition, 
since it is not satisfied even for some log pairs with simple structure 
(see Corollary \ref{1.10}).

In what follows, we always work over {\it an algebraically closed 
field $k$ of characteristic $p>0$} unless otherwise stated. 
The following main theorem, i.e.\ the Kawamata-Viehweg vanishing 
theorem on rational surfaces, will be proved in this paper.

\begin{thm}\label{1.4}
Let $X$ be a normal projective rational surface, 
$D$ a $\Q$-Cartier Weil divisor on $X$, and 
$B$ an effective $\Q$-divisor such that $(X,B)$ is KLT 
and $D-(K_X+B)$ is ample. Then $H^1(X,D)=0$ holds.
\end{thm}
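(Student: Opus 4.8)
The plan is to transfer the problem, by means of a log resolution, to the logarithmic vanishing of Theorem \ref{1.3} on a smooth rational surface; the decisive point will be the verification of the $W_2(k)$-lifting hypothesis there.

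First I would choose a log resolution $f\colon Y\ra X$ of the pair $(X,B)$, so that $Y$ is a smooth projective rational surface and $E:=\Exc(f)\cup f_*^{-1}B$ is a reduced simple normal crossing divisor. Writing $K_Y+B_Y=f^*(K_X+B)$, the hypothesis that $(X,B)$ is KLT forces every coefficient of $B_Y$ to be strictly less than $1$; since moreover the coefficients of $B$ lie in $[0,1)$, the round-down $\lfloor B_Y\rfloor$ has all coefficients $\le 0$ and is supported on the $f$-exceptional locus. Put $A:=D-(K_X+B)$, which is ample, and set $H:=f^*A+\langle B_Y\rangle$. Then $H$ is nef and big, one has $H=f^*D-K_Y-\lfloor B_Y\rfloor$, so that $\langle H\rangle=\langle f^*D\rangle$ is supported on $\Exc(f)\subseteq E$, and $K_Y+\ulcorner H\urcorner=\ulcorner f^*D\urcorner-\lfloor B_Y\rfloor$. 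Because $-\lfloor B_Y\rfloor$ is effective and $f$-exceptional, $f_*\OO_Y(K_Y+\ulcorner H\urcorner)=\OO_X(D)$, while the higher direct images vanish by relative vanishing for the birational morphism $f$ of surfaces, which rests on the rationality of the KLT singularities of $X$ (valid in characteristic $p>0$ for surfaces). The Leray spectral sequence then yields an isomorphism $H^1(X,D)\cong H^1(Y,K_Y+\ulcorner H\urcorner)$, so it suffices to prove that the latter group vanishes.

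The heart of the matter is to show that the log pair $(Y,\Supp\langle H\rangle)$ admits a lifting to $W_2(k)$. Here I would exploit the explicit birational structure of the smooth rational surface $Y$: it is obtained from a minimal rational surface (namely $\PP^2$ or a Hirzebruch surface $\F_n$) by a sequence of blow-ups at closed points. The minimal model lifts to $W(k)$, each blow-up center lifts as a section over $W_2(k)$, and a reduced curve on $\PP^2$ or $\F_n$ lifts by lifting its defining equation; the simple normal crossing condition, being a transversality condition, is preserved under a sufficiently general lift. Assembling these liftings compatibly produces a lifting of the pair $(Y,\Supp\langle H\rangle)$ to $W_2(k)$. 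This is the step I expect to be the main obstacle: one must lift the surface and the \emph{entire} boundary configuration simultaneously and compatibly, and the argument is most delicate in small characteristic, where transversality and the behaviour of the exceptional curves require separate care.

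Finally, Theorem \ref{1.3} is stated for an ample $\Q$-divisor, whereas $H$ is only nef and big. I would bridge this gap by a standard perturbation: using Kodaira's lemma on $Y$, together with the fact that $f^*A$ fails to be ample only along the $f$-exceptional curves contained in $E$, one replaces $H$ by a $\Q$-linearly equivalent ample divisor with the same round-up $\ulcorner H\urcorner$ and the same fractional support (refining the resolution if necessary so that the small effective correction does not raise any integral coefficient). With the lifting of $(Y,\Supp\langle H\rangle)$ now in hand, Theorem \ref{1.3} applies with $d=2$ and $\inf(d,p)=2$, giving $H^i(Y,K_Y+\ulcorner H\urcorner)=0$ for all $i>d-\inf(d,p)=0$, in particular for $i=1$. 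Combining this with the isomorphism of the second paragraph yields $H^1(X,D)=0$, as desired.
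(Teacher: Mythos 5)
Your skeleton is the same as the paper's (log resolution, transfer of $H^1$ to the resolution, perturbation to an ample $\Q$-divisor, then Theorem \ref{1.3}), but the step you defer with ``assembling these liftings compatibly'' is precisely the paper's main technical content (Lemmas \ref{2.3}, \ref{2.4}, Definition \ref{2.5}, Proposition \ref{2.6}), and your sketch of it would not work as described. First, there is no genericity available: a lifting to $W_2(k)$ must reduce modulo $p$ to the \emph{given} configuration, so you cannot arrange transversality ``by a sufficiently general lift'' (in fact, SNC-ness of a lifting is automatic once it exists, by \cite[Lemmas 8.13, 8.14]{ev}; that is not where the difficulty lies). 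The real difficulty is the opposite one: the image on the minimal model of the SNC divisor upstairs is in general \emph{not} SNC --- its components are singular, tangent, or several pass through one blow-up center (the paper notes this explicitly) --- so what must be lifted on $\PP^2$ or $H_m$ is an arbitrary reduced divisor together with the centers. This is why the paper introduces \emph{mixed liftings} and proves Lemma \ref{2.3} (any reduced divisor on a Hirzebruch surface lifts, via $H^2(X,\OO_X)=0$ and $H^1(X,\LL)=0$). Second, the compatibility you wave at is essential, not cosmetic: in the induction of Lemma \ref{2.4} the lifted center must be chosen \emph{on} the lifted divisor (the paper takes $\wt{P_0}=\eta(P_0)$ via the closed immersion $\eta:D^0\hookrightarrow\wt{D^0}$). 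If $\wt{D}=V(f)$ and $f(\wt{P_0})=pc\neq 0$, then the strict transform of $\wt{D}$ under the lifted blow-up coincides with the total transform, and its reduction mod $p$ acquires the exceptional curve, so it is no longer a lifting of the strict transform downstairs; the induction then collapses.

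Third, the bottom-up construction can only ever lift a divisor containing the \emph{entire} exceptional locus of $Y\ra Y_{min}$ (each blow-up forces its exceptional curve into the lifted pair), and to feed the result into Theorem \ref{1.3} that enlarged divisor must itself be SNC. Your log resolution only makes $\Exc(f)\cup f_*^{-1}B$ SNC, while $\Exc(Y\ra Y_{min})$ is a different collection of negative curves, and the union need not be SNC. This is exactly what the paper's notion of a \emph{suitable} divisor (Definition \ref{2.5}) encodes: the resolution is chosen so that $G=\Supp(h_*^{-1}B)\cup\Exc(h)\cup(\hbox{certain negative curves})$ is simultaneously suitable and SNC, which is the hypothesis of Proposition \ref{2.6}. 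Two smaller, fixable points. (a) Your relative-vanishing step is not justified: with your $H$ one has $K_Y+\ulcorner H\urcorner\equiv K_Y+\langle B_Y\rangle+\{f^*D\}+f^*A$, and the exceptional coefficients $\langle -a_i\rangle+\{d_i\}$ (with $a_i$ the discrepancy and $d_i$ the coefficient of $f^*D$ along $E_i$) can be $\geq 1$, so Lemma \ref{3.1} does not apply to this decomposition, and rationality of the singularities of $X$ alone does not give $R^1f_*\OO_Y(K_Y+\ulcorner H\urcorner)=0$ since this divisor is not $f$-nef. Either use the paper's divisor $D_Y=\ulcorner h^*D+\sum_ia_iE_i\urcorner$, which is engineered so that Lemma \ref{3.1} applies, or note that the Leray five-term sequence gives an injection $H^1(X,f_*\OO_Y(K_Y+\ulcorner H\urcorner))\hookrightarrow H^1(Y,K_Y+\ulcorner H\urcorner)$, which suffices. (b) Your ample perturbation cannot be by a ``$\Q$-linearly equivalent'' divisor, since ampleness is a numerical property; the correct move, as in the paper, is to replace $H$ by $H-\sum_i\delta_iE_i$ with $0<\delta_i\ll 1$, which is ample, has the same round-up, and has fractional support contained in $\Supp\langle H\rangle\cup\Exc(f)$ --- note that this enlarges the fractional support to include all of $\Exc(f)$, which again forces the divisor you lift to contain the exceptional curves and is consistent with the paper's choice of $G$.
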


Thanks to Theorem \ref{1.3}, we have only to verify that the lifting 
condition to $W_2(k)$ holds for certain log pairs on smooth rational 
surfaces. The main idea of the proof is to reduce the problem to the 
Hirzebruch surface case.

\begin{defn}\label{1.5}
A pair $(X,B)$ is called a log del Pezzo surface, if $X$ is a normal 
projective surface, and $B$ is an effective $\Q$-divisor on $X$ 
such that $(X,B)$ is KLT and $-(K_X+B)$ is ample.

A normal projective surface $X$ is called a log del Pezzo surface 
(resp.\ weak log del Pezzo surface), if $(X,0)$ is KLT and $-K_X$ 
is ample (resp.\ nef and big).
\end{defn}

There are some corollaries of the main theorem.

\begin{cor}\label{1.6}
Let $(X,B)$ be a log del Pezzo surface, $D$ a $\Q$-Cartier Weil 
divisor on $X$ such that $D-(K_X+B)$ is ample. Then $H^1(X,D)=0$ holds.
\end{cor}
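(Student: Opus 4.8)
The plan is to derive Corollary 1.6 directly from the main theorem, Theorem 1.4, by verifying that a log del Pezzo surface satisfies the hypotheses of that theorem. First I would observe that a log del Pezzo surface $(X,B)$ is, by Definition 1.5, a normal projective surface with $(X,B)$ KLT. The divisor $D$ is assumed $\Q$-Cartier Weil with $D-(K_X+B)$ ample, so the only gap between the hypotheses of Corollary 1.6 and those of Theorem 1.4 is the requirement that $X$ be rational.

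Thus the main point I would establish is that every log del Pezzo surface is rational. The idea is standard: since $-(K_X+B)$ is ample and $B$ is effective, $-K_X = -(K_X+B) + B$ is a sum of an ample $\Q$-divisor and an effective $\Q$-divisor, hence big; in particular $K_X$ is not pseudo-effective. I would then pass to a minimal resolution $f\colon \wt X \ra X$ and check that $\wt X$ is again a surface on which $-K_{\wt X}$ is big. Concretely, writing $K_{\wt X} = f^*K_X + \sum a_iE_i$ with discrepancies $a_i > -1$ (by the KLT assumption on $(X,B)$, noting the exceptional divisors appear with coefficients controlled by the log discrepancies), one sees that $-K_{\wt X}$ is big because $-f^*K_X$ is big and nef and the correction term is a combination of exceptional curves. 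On the smooth projective surface $\wt X$, bigness of $-K_{\wt X}$ forces the Kodaira dimension $\kappa(\wt X) = -\infty$ and, via $h^0(\wt X, mK_{\wt X}) = 0$ together with Castelnuovo's criterion (the plurigenera and the irregularity vanish), one concludes $\wt X$ is rational. Since rationality is a birational invariant, $X$ is rational as well.

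Having shown $X$ is rational, the conclusion is immediate: $(X,B)$ now satisfies all the hypotheses of Theorem 1.4 — namely $X$ is a normal projective rational surface, $D$ is a $\Q$-Cartier Weil divisor, $(X,B)$ is KLT, and $D-(K_X+B)$ is ample — so Theorem 1.4 yields $H^1(X,D)=0$ directly.

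The main obstacle I anticipate is the rationality argument in positive characteristic: one must be careful that the classical implication ``$-K$ big $\Rightarrow$ rational'' still holds over an algebraically closed field of characteristic $p>0$. In characteristic zero this follows quickly from Castelnuovo's rationality criterion combined with the vanishing of the irregularity, but in positive characteristic the irregularity $q = h^1(X,\OO_X)$ can in principle exceed $\dim \Alb(X)$, so one should verify the vanishing of the relevant cohomology groups carefully, or invoke the classification of surfaces with $\kappa = -\infty$ and the fact that a surface with $-K$ big and nef after resolution cannot be birationally ruled over a curve of positive genus. I expect this to be the only genuinely delicate step; everything else is a direct matching of hypotheses.
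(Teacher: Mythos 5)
Your overall reduction is the same as the paper's: the only thing separating Corollary \ref{1.6} from Theorem \ref{1.4} is rationality of $X$. But your rationality argument has a genuine gap, and it is not the characteristic-$p$ subtlety you flag at the end; the implication ``$-K_{\wt X}$ big $\Rightarrow \wt X$ rational'' is false already over $\C$. Bigness of $-K_{\wt X}$ does give $\kappa(\wt X)=-\infty$, hence vanishing of all plurigenera, but Castelnuovo's criterion also requires $q=h^1(\wt X,\OO_{\wt X})=0$, and that does not follow. Concretely, let $C$ be an elliptic curve, $\LL$ a line bundle of degree $-1$ on $C$, and $Y=\PP(\OO_C\oplus\LL)$, with minimal section $C_0$ (so $C_0^2=-1$) and fiber $f$. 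Then $K_Y\equiv -2C_0-f$, so $-K_Y\equiv 2C_0+f=(C_0+f)+C_0$; here $C_0+f$ is nef with $(C_0+f)^2=1>0$, so $-K_Y$ is (nef and big) $+$ (effective), hence big --- yet $q(Y)=1$ and $Y$ is not rational. Note that this example is precisely of the shape your argument produces on the minimal resolution, namely $-K_{\wt X}\equiv(\hbox{nef and big})+(\hbox{effective})$: that shape alone can never force rationality. What saves the log del Pezzo case is the KLT condition, which guarantees that in the decomposition $-K_{\wt X}\equiv f^*(-(K_X+B))+f_*^{-1}B+\sum_i(-b_i)E_i$ every coefficient of the effective part is $<1$; your proof never uses this. (On the surface $Y$ above no such decomposition with coefficients $<1$ exists: the effective part $\Delta$ would have to contain $C_0$, say with coefficient $d<1$, and then $-1=-K_Y\cdot C_0\geq N\cdot C_0+dC_0^2\geq -d>-1$, a contradiction.) There is also a smaller slip at the start: writing $K_{\wt X}=f^*K_X+\sum a_iE_i$ presupposes that $K_X$ itself is $\Q$-Cartier, whereas KLT only makes $K_X+B$ $\Q$-Cartier; and $-f^*K_X$ is big but in general not nef, since $-K_X$ need not be nef.

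The paper takes a different and gap-free route to rationality: by the cone theorem for log surfaces of Koll\'ar and Kov\'acs \cite{kk}, valid in any characteristic, ampleness of $-(K_X+B)$ forces $\overline{NE}(X)$ to be generated by rational curves; combined with Lemma \ref{3.4} (KLT surfaces have only rational singularities, proved via the relative vanishing Lemma \ref{3.1}), this yields that $X$ is rational, and then Theorem \ref{1.4} applies exactly as you say. If you want to salvage your minimal-resolution approach, you must exploit the coefficient bound $<1$: assuming $\wt X$ is ruled over a curve of genus $g\geq 1$, push the decomposition down to a relatively minimal model $\PP(\EE)\ra C$ (pushforward preserves nefness and bigness of $N$ and keeps the coefficients of the effective part $<1$), then intersect with the minimal section and use Nagata's bound $e\geq -g$ together with the Hodge index theorem to reach a contradiction. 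Mere bigness of the anticanonical class will never suffice.
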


\begin{cor}\label{1.7}
Let $X$ be a (weak) log del Pezzo surface. Then $H^1(X,\OO_X)=0$ holds.
\end{cor}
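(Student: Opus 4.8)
The plan is to deduce the statement from Corollary \ref{1.6} in both cases. When $-K_X$ is ample, the pair $(X,0)$ is a log del Pezzo surface in the sense of Definition \ref{1.5}, and taking the $\Q$-Cartier Weil divisor $D=0$ gives $D-(K_X+0)=-K_X$ ample; Corollary \ref{1.6} then yields $H^1(X,\OO_X)=H^1(X,\OO_X(0))=0$ at once. So the real work is the weak case, where $-K_X$ is only nef and big and hence lies on the boundary of the nef cone rather than in the ample cone.

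Here the idea is to perturb $-K_X$ into the ample cone at the cost of a small effective boundary. Since $-K_X$ is big and $\Q$-Cartier, I would apply Kodaira's lemma to write $m(-K_X)\sim A'+E'$ with $A'$ ample and $E'$ effective for some $m>0$; setting $A=\tfrac1m A'$ and $E=\tfrac1m E'$ gives a $\Q$-linear equivalence $-K_X\sim_\Q A+E$ with $A$ ample and $E$ effective. Note that $E$ is automatically $\Q$-Cartier, being $\Q$-linearly equivalent to the difference $-K_X-A$ of $\Q$-Cartier divisors, so $(X,tE)$ is a genuine log pair with $\Q$-Cartier log canonical divisor for every $t$. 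For $0<t<1$ one then has
\[ -(K_X+tE)\sim_\Q (1-t)(-K_X)+tA, \]
a sum of a nef class and an ample class, hence ample.

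Finally I would fix $t$ suitably. Because being KLT is an open condition on the coefficients of the boundary and $(X,0)$ is KLT, the pair $(X,tE)$ remains KLT for all sufficiently small $t>0$; for such $t$ it is a log del Pezzo surface with $-(K_X+tE)$ ample. A last application of Corollary \ref{1.6}, now to $(X,tE)$ and $D=0$, then gives $D-(K_X+tE)=-(K_X+tE)$ ample and therefore $H^1(X,\OO_X)=0$. The hard part is precisely this perturbation: one must move $-K_X$ off the boundary of the nef cone into the ample cone while simultaneously keeping the pair KLT, which is exactly where Kodaira's lemma (to split off the ample part of the big class) must be combined with the openness of the KLT condition (to keep the discrepancies of the small boundary $tE$ under control).
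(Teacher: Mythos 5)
Your proof is correct and follows essentially the same route as the paper: the paper deduces Corollary \ref{1.7} from Theorem \ref{1.4} or Corollary \ref{3.2}, and the proof of Corollary \ref{3.2} performs exactly the perturbation you spell out, namely choosing an effective $\Q$-divisor $B_1$ with $(X,B_1)$ KLT and $-(K_X+B_1)$ ample (which is obtained precisely by Kodaira's lemma plus openness of the KLT condition). Your write-up merely makes that construction explicit and channels it through Corollary \ref{1.6}, which is a cosmetic difference.
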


\begin{rem}\label{1.8}
A Fano variety, by definition, is a projective variety $X$ with the 
anticanonical divisor $-K_X$ ample. Fano surface is conventionally 
called del Pezzo surface. As is well known, Fano variety has 
appeared as a kind of outcome of running the minimal model program, 
so the study of Fano varieties is of certain interest in birational 
geometry of algebraic varieties. Let us recall some known vanishing 
or non-vanishing results concerning Fano varieties in positive 
characteristic, which show that Corollary \ref{1.6} is just a result 
as expected.

(1) Tango \cite{ta} has proved that the Kodaira vanishing 
theorem does hold on smooth projective ruled surfaces, hence on 
{\it smooth} del Pezzo surface.

(2) Reid \cite{re} has found {\it nonnormal} del Pezzo surfaces 
$X$ with $H^1(X,\OO_X)\neq 0$.

(3) Schr\"{o}er \cite{sc} proved that over any {\it nonperfect} 
field $k$ of characteristic $p=2$, there is a normal del Pezzo 
surface $X$ with $H^1(X,\OO_X)\neq 0$.

(4) Shepherd-Barron \cite{sb} established that $H^1(X,\OO_X)=
H^2(X,\OO_X)=0$ holds for smooth Fano {\it threefolds}.

(5) Lauritzen and Rao \cite{lr} has constructed counterexamples 
to the Kodaira vanishing theorem on some smooth Fano varieties of 
{\it dimension at least 6}.
\end{rem}

Theorem \ref{1.4} also implies the following corollary, which is a weak 
version of the logarithmic Koll\'{a}r vanishing theorem 
\cite[Theorem 10.19]{ko} and the logarithmic semipositivity theorem 
\cite[Theorem 1.2 and Corollary 1.3]{ka00} on rational surfaces.

\begin{cor}\label{1.9}
Let $X$ be a normal projective rational surface, $f:X\ra \PP^1$ a 
surjective proper morphism, and $B$ an effective $\Q$-divisor on $X$ 
such that $(X,B)$ is KLT. Let $D$ be a $\Q$-Cartier Weil divisor on $X$ 
such that $D-(K_X+B)$ is ample. Then 

(1) $H^1(\PP^1,R^if_*\OO_X(D))=0$ holds for any $i\geq 0$, and 

(2) $f_*\OO_X(D-f^*K_{\PP^1})$ is an ample vector bundle on $\PP^1$.
\end{cor}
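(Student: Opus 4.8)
The plan is to deduce both statements from the two vanishings $H^1(X,\OO_X(D))=0$ and $H^2(X,\OO_X(D))=0$ on $X$ itself, transported to $\PP^1$ through the Leray spectral sequence of $f$. Since $X$ is a surface and $f$ is surjective onto a curve, every fibre is one-dimensional, so $R^if_*\OO_X(D)=0$ for $i\geq 2$ and the only interesting indices in (1) are $i=0,1$. Because both $\PP^1$ and the fibres of $f$ have dimension at most one, the spectral sequence $E_2^{p,q}=H^p(\PP^1,R^qf_*\OO_X(D))\Rightarrow H^{p+q}(X,\OO_X(D))$ has $E_2^{p,q}=0$ unless $p,q\in\{0,1\}$, hence every differential $d_2$ has source or target zero and the sequence degenerates at $E_2$.

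For part (1), since $H^2(\PP^1,f_*\OO_X(D))=0$ the five-term exact sequence yields a short exact sequence $0\ra H^1(\PP^1,f_*\OO_X(D))\ra H^1(X,\OO_X(D))\ra H^0(\PP^1,R^1f_*\OO_X(D))\ra 0$. Theorem \ref{1.4} gives $H^1(X,\OO_X(D))=0$, so the $i=0$ case $H^1(\PP^1,f_*\OO_X(D))=0$ follows at once. For $i=1$, degeneration identifies $H^1(\PP^1,R^1f_*\OO_X(D))$ with the only surviving graded piece in total degree two, namely $H^2(X,\OO_X(D))$, so it suffices to prove $H^2(X,\OO_X(D))=0$. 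Here I would invoke Serre duality: a normal projective surface is Cohen-Macaulay, so $H^2(X,\OO_X(D))^\vee\cong\Hom_{\OO_X}(\OO_X(D),\omega_X)\cong H^0(X,\OO_X(K_X-D))$, the last isomorphism because divisorial sheaves on a normal variety are reflexive. Finally $H^0(X,\OO_X(K_X-D))=0$: writing $D-K_X=A+B$ with $A:=D-(K_X+B)$ ample and $B$ effective shows $D-K_X$ is big, so an effective $E\sim K_X-D$ would give $E+A+B\sim 0$ and hence $0=(E+A+B)\cdot H>0$ for an ample Cartier $H$, a contradiction.

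For part (2), note first that $f_*\OO_X(D-f^*K_{\PP^1})$ is torsion-free, hence locally free on the smooth curve $\PP^1$, so it is a vector bundle $E$. Since every bundle on $\PP^1$ splits as $\bigoplus\OO_{\PP^1}(a_i)$, ampleness of $E$ is equivalent to all $a_i\geq 1$, equivalently to $H^1(\PP^1,E\otimes\omega_{\PP^1})=0$. By the projection formula $E\otimes\omega_{\PP^1}=f_*\OO_X(D-f^*K_{\PP^1})\otimes\omega_{\PP^1}\cong f_*\OO_X(D)$, whose first cohomology vanishes by the $i=0$ case of part (1). Hence $E$ is ample.

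The routine parts are the collapse of the spectral sequence and the projection-formula identity; the step needing the most care is the auxiliary vanishing $H^2(X,\OO_X(D))=0$, where one must apply Serre duality correctly on the merely normal (but Cohen-Macaulay) surface $X$ and rule out sections of $K_X-D$ using the bigness of $D-(K_X+B)$. I expect this to be the main, though not severe, obstacle.
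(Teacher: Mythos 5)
Your proof is correct, and for the crux --- the case $i=1$ of part (1) --- it takes a genuinely different route from the paper. The paper does not argue on the singular surface $X$ directly: it passes to a log resolution $h:Y\ra X$ as in the proof of Theorem \ref{1.4}, sets $g=f\circ h$, and invokes Corollary \ref{3.3} (a relative vanishing $R^1g_*\OO_Y(K_Y+\ulcorner H_Y\urcorner)=0$, itself proved by applying Theorem \ref{1.4} to the twists $K_Y+\ulcorner H_Y\urcorner+mF$, Serre vanishing, and generation by global sections) to conclude the sheaf-level statement $R^1f_*\OO_X(D)=0$, after which $i=1$ is trivial and $i=0$ follows from Leray just as in your argument. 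You instead stay on $X$: the degeneration of the Leray spectral sequence identifies $H^1(\PP^1,R^1f_*\OO_X(D))$ with $H^2(X,\OO_X(D))$, which you kill by Serre duality on the Cohen--Macaulay surface $X$ (using reflexivity to get $\Hom_{\OO_X}(\OO_X(D),\omega_X)\cong H^0(X,\OO_X(K_X-D))$) together with the bigness of $D-K_X=A+B$, which forbids effective divisors in $|K_X-D|$. Both arguments are sound; note that intersecting the $\Q$-linearly trivial $1$-cycle $E+A+B$ with an ample Cartier divisor is legitimate since intersection with Cartier divisors is defined for Weil $\Q$-divisors and invariant under rational equivalence. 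The trade-off: the paper's route yields the stronger conclusion that the sheaf $R^1f_*\OO_X(D)$ itself vanishes (a relative vanishing of independent interest), at the cost of the resolution machinery and Corollary \ref{3.3}; your route is more economical and self-contained, needing only Theorem \ref{1.4} as a black box plus standard duality on normal surfaces, but it establishes only the cohomological vanishing actually asserted in (1). Part (2) and the case $i=0$ are handled essentially identically in both proofs (Grothendieck splitting on $\PP^1$; your reformulation of ampleness as $H^1(\PP^1,E\otimes\omega_{\PP^1})=0$ is the same computation as the paper's observation that the twist by $\omega_{\PP^1}^{-1}$ shifts each $d_i$ to $d_i+2$).
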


Unfortunately, Theorem \ref{1.4} and Corollary \ref{1.9} fail for 
certain ruled surfaces (see \cite[Examples 3.7,3.9,3.10]{xie06}). 
As a consequence, it follows that the lifting condition to $W_2(k)$ is 
not satisfied even for some log pairs on geometrically ruled surfaces 
(see \cite[Definition 2.6]{xie07} for the definition of Tango curve).

\begin{cor}\label{1.10}
If $C$ is a Tango curve, then there are a $\PP^1$-bundle $f:X\ra C$ 
and a smooth curve $C'\subset X$ such that $(X,C')$ cannot be 
lifted to $W_2(k)$.
\end{cor}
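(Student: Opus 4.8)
The plan is to argue by contradiction, deriving the failure of the lifting condition from the failure of Kawamata--Viehweg vanishing that already occurs over Tango curves, with Theorem \ref{1.3} as the bridge. The starting input is the counterexamples of \cite[Examples 3.7, 3.9, 3.10]{xie06}: for a Tango curve $C$ one constructs a $\PP^1$-bundle $f\colon X\ra C$ together with a smooth curve $C'\subset X$, an effective $\Q$-divisor $B$ with $\Supp(B)\subseteq C'$ and all coefficients in $[0,1)$ so that $(X,B)$ is KLT, and a $\Q$-Cartier Weil divisor $D$ with $D-(K_X+B)$ ample but $H^1(X,D)\neq 0$. This is precisely the configuration for which Theorem \ref{1.4} and Corollary \ref{1.9} break down, and I would import it as a black box.

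Suppose, for contradiction, that the log pair $(X,C')$ admits a lifting to $W_2(k)$. Since $X$ is a $\PP^1$-bundle over a smooth curve it is a smooth projective surface, and $C'$, being a smooth curve, is a simple normal crossing divisor. Set
\[ H:=D-(K_X+B), \]
an ample $\Q$-divisor. As $D-K_X$ is integral and $B=\sum b_iB_i$ has all $b_i\in[0,1)$, one computes $\langle H\rangle=\langle -B\rangle$, whence $\Supp(\langle H\rangle)=\Supp(B)\subseteq C'$, while $\ulcorner H\urcorner=D-K_X-\lfloor B\rfloor=D-K_X$, so that $K_X+\ulcorner H\urcorner=D$.

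Now apply Theorem \ref{1.3} to the smooth projective surface $X$, the ample $\Q$-divisor $H$, and the simple normal crossing divisor $C'\supseteq\Supp(\langle H\rangle)$: since $\dim X=2$ and $\inf(2,p)=2$, the vanishing yields $H^i(X,K_X+\ulcorner H\urcorner)=0$ for all $i>d-\inf(d,p)=0$, in particular $H^1(X,D)=H^1(X,K_X+\ulcorner H\urcorner)=0$. This contradicts the nonvanishing $H^1(X,D)\neq 0$ supplied by \cite{xie06}. Hence $(X,C')$ cannot be lifted to $W_2(k)$.

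The main point to verify — and essentially the only genuine obstacle — is that the counterexamples of \cite{xie06} can be arranged with the boundary $B$ supported on a single smooth curve $C'$, so that the object required to lift is exactly the pair $(X,C')$ rather than a larger normal crossing divisor; this is where the specific geometry of the Tango structure enters, $C'$ being the section (or multisection) along which the Frobenius pathology of $C$ propagates to $X$. Once that structural input is secured, the rest is the elementary divisor bookkeeping above combined with a direct application of Theorem \ref{1.3}.
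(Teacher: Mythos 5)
Your proposal is correct and takes essentially the same approach as the paper: both derive a contradiction by applying Theorem \ref{1.3} (with $d=2$, so $\inf(d,p)=2$ and $H^1$ must vanish) to a known counterexample to Kawamata--Viehweg vanishing on a $\PP^1$-bundle over a Tango curve. The one structural point you flag as the ``only genuine obstacle'' --- that the counterexample can be arranged with fractional support equal to a single smooth curve $C'$ --- is exactly what the paper's citation supplies: it invokes \cite[Theorem 3.1]{xie07} (rather than the examples of \cite{xie06}), which directly provides an ample $\Q$-divisor $H$ with $\Supp(\langle H\rangle)=C'$ smooth, $f|_{C'}:C'\ra C$ the $k$-linear Frobenius, and $H^1(X,K_X+\ulcorner H\urcorner)\neq 0$, so your divisor bookkeeping reducing to that form is consistent with, and subsumed by, the paper's one-line argument.
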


In \S \ref{S2}, we will prove some results concerning the 
lifting property of certain log pairs on smooth rational surfaces. 
\S \ref{S3} is devoted to the proofs of the main theorem and the 
corollaries. Finally, we will give some remarks on the main results 
in \S \ref{S4}. For the necessary notions and results in birational 
geometry, e.g.\ Kawamata log terminal singularity, we refer 
the reader to \cite{kmm} and \cite{km}.

\begin{nota}
We use $\equiv$ to denote numerical equivalence, and $[B]=\sum [b_i] B_i$ 
(resp.\ $\ulcorner B\urcorner=\sum \ulcorner b_i\urcorner B_i$, 
$\langle B\rangle=\sum \langle b_i\rangle B_i$, $\{B\}=\sum\{b_i\}B_i$ ) 
to denote the round-down (resp.\ round-up, fractional part, upper fractional 
part) of a $\Q$-divisor $B=\sum b_iB_i$, where for a real number $b$, 
$[b]:=\max\{ n\in\Z \,|\,n\leq b \}$, $\ulcorner b\urcorner:=-[-b]$, 
$\langle b\rangle:=b-[b]$ and $\{ b\}:=\ulcorner b\urcorner-b$.
\end{nota}

\begin{ack}
I would like to express my gratitude to Professor Yujiro Kawamata 
for valuable advice and warm encouragement. 
I would also like to thank Professors Noboru Nakayama and Nobuo Hara 
for useful comments. 
I am indebted to the referee for giving a simple proof of Lemma \ref{2.3}. 
This work was partially supported by the 21st Century COE Program 
and the Global COE Program in the University of Tokyo.
\end{ack}

\section{Lifting property on smooth rational surfaces}\label{S2}

Let us first recall some definitions from \cite[Definition 8.11]{ev}.

\begin{defn}\label{2.1}
Let $W_2(k)$ be the ring of Witt vectors of length two of $k$. 
Then $W_2(k)$ is flat over $\Z/p^2\Z$, and $W_2(k)\otimes_{\Z/p^2\Z}\F_p=k$. 
For the explicit construction and further properties of $W_2(k)$, 
we refer the reader to \cite[II.6]{se}. The following definition 
generalizes the definition \cite[1.6]{di} of liftings of $k$-schemes 
to $W_2(k)$.

Let $X$ be a noetherian scheme over $k$, and $D=\sum D_i$ a reduced Cartier 
divisor on $X$. A lifting of $(X,D)$ to $W_2(k)$ consists of a scheme 
$\wt{X}$ and closed subschemes $\wt{D_i}\subset\wt{X}$, all defined and 
flat over $W_2(k)$ such that $X=\wt{X}\times_{\Spec W_2(k)}\Spec k$ and 
$D_i=\wt{D_i}\times_{\Spec W_2(k)}\Spec k$. We write 
$\wt{D}=\sum \wt{D_i}$ and say that $(\wt{X},\wt{D})$ is a lifting 
of $(X,D)$ to $W_2(k)$, if no confusion is likely.
\end{defn}

In the above definition, assume further that $X$ is smooth 
over $k$ and $D=\sum D_i$ is simple normal crossing. 
If $(\wt{X},\wt{D})$ is a lifting of $(X,D)$ to $W_2(k)$, 
then $\wt{X}$ is smooth over $W_2(k)$ and $\wt{D}=\sum \wt{D_i}$ 
is simple normal crossing over $W_2(k)$, i.e.\ $\wt{X}$ 
is covered by affine open subsets $\{U\}$, such that each 
$U$ is \'{e}tale over $\A^n_{W_2(k)}$ via coordinates 
$\{ x_1,\cdots,x_n \}$ and $\wt{D}|_U$ is defined by 
the equation $x_1\cdots x_\nu=0$ with $1\leq\nu\leq n$ 
(see \cite[Lemmas 8.13,8.14]{ev}).

If $\wt{X}$ is a lifting of $X$ to $W_2(k)$, then there is an exact 
sequence of $\OO_{\wt{X}}$-modules
\[ 0\ra \OO_X\stackrel{p}{\ra} \OO_{\wt{X}}\stackrel{r}{\ra} \OO_X\ra 0, \]
where $p(x):=px$ and $r(\wt{x}):=\wt{x}\mod p$ 
for $x\in\OO_X,\wt{x}\in\OO_{\wt{X}}$ (see \cite[Lemma 8.13]{ev}).

For instance, $\A^n_k$, $\PP^n_k$ and 
$H_m=\PP(\OO_{\PP^1_k}\oplus\OO_{\PP^1_k}(-m))$ have liftings to $W_2(k)$.

\begin{defn}\label{2.2}
Let $X$ be a smooth scheme over $k$, $D=\sum D_i$ a reduced divisor on 
$X$, and $Z$ a closed subscheme of $X$ smooth over $k$ of codimension 
$s\geq 2$. A mixed lifting of $(X,D+Z)$ to $W_2(k)$ consists of a smooth 
scheme $\wt{X}$ over $W_2(k)$, closed subschemes $\wt{D_i}\subset\wt{X}$ 
flat over $W_2(k)$, and a closed subscheme $\wt{Z}\subset\wt{X}$ smooth 
over $W_2(k)$ such that $X=\wt{X}\times_{\Spec W_2(k)}\Spec k$, 
$D_i=\wt{D_i}\times_{\Spec W_2(k)}\Spec k$ and 
$Z=\wt{Z}\times_{\Spec W_2(k)}\Spec k$. 
We write $\wt{D}=\sum \wt{D_i}$ and say that $(\wt{X},\wt{D}+\wt{Z})$ 
is a mixed lifting of $(X,D+Z)$ to $W_2(k)$, if no confusion is likely.
\end{defn}

In the above definition, either $D=\emptyset$ or $Z=\emptyset$ is allowed. 
Obviously, if $Z=\emptyset$ then a mixed lifting $(\wt{X},\wt{D})$ of 
$(X,D)$ is indeed a lifting of $(X,D)$ to $W_2(k)$.

For instance, if $X=\A_k^n$ or $\PP_k^n$ or $H_m$, and $P\in X$ is 
a closed point (or an infinitesimal closed point), then $(X,P)$ has 
a mixed lifting to $W_2(k)$.

We need the following elementary lemmas.

\begin{lem}\label{2.3}
Let $X=H_m=\PP(\OO_{\PP^1_k}\oplus\OO_{\PP^1_k}(-m))$ be a 
Hirzebruch surface with $m\geq 0$. Then for any reduced divisor 
$D$ on $X$, $(X,D)$ has a mixed lifting to $W_2(k)$.
\end{lem}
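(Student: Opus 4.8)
The plan is to lift the surface first and then lift $D$ one irreducible component at a time, taking $Z=\emptyset$ so that the resulting mixed lifting is in fact an honest lifting of $(X,D)$ in the sense of Definition \ref{2.1}. Since $H_m=\PP(\OO_{\PP^1_k}\oplus\OO_{\PP^1_k}(-m))$, it carries the obvious lifting $\wt X=\PP(\OO_{\PP^1_{W_2(k)}}\oplus\OO_{\PP^1_{W_2(k)}}(-m))$, which is smooth over $W_2(k)$ and admits a lift $\wt\pi:\wt X\ra\PP^1_{W_2(k)}$ of the ruling $\pi:X\ra\PP^1_k$. Because $\Pic(X)=\Z C_0\oplus\Z F$ is generated by the negative section $C_0$ (with $C_0^2=-m$) and a fibre $F$, and both $\OO_X(C_0)=\OO_X(1)$ and $\OO_X(F)=\pi^*\OO_{\PP^1}(1)$ lift to $\wt X$, every line bundle on $X$ lifts to $\wt X$. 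It therefore suffices, for each irreducible component $D_i$ of the reduced divisor $D$, to produce a closed subscheme $\wt{D_i}\subset\wt X$, flat over $W_2(k)$, restricting to $D_i$ on the central fibre.

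For such a $D_i$, write its class as $aC_0+bF$, set $L=\OO_X(D_i)$, and let $\wt L$ be a lift of $L$ to $\wt X$. Tensoring the reduction sequence $0\ra\OO_X\stackrel{p}{\ra}\OO_{\wt X}\stackrel{r}{\ra}\OO_X\ra 0$ by the flat sheaf $\wt L$ yields $0\ra L\ra\wt L\ra L\ra 0$, and the associated cohomology sequence shows that the section $s_i\in H^0(X,L)$ cutting out $D_i$ lifts to some $\wt s_i\in H^0(\wt X,\wt L)$ as soon as $H^1(X,L)=0$; the vanishing locus $\wt{D_i}=\{\wt s_i=0\}$ is then flat over $W_2(k)$ by the local criterion and restricts to $D_i$. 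To verify the vanishing I would push forward along $\pi$: for $a\geq 0$ one has $R^1\pi_*\OO_X(a)=0$ and $\pi_*\OO_X(aC_0+bF)=\bigoplus_{i=0}^a\OO_{\PP^1}(b-im)$, so $H^1(X,\OO_X(aC_0+bF))=\bigoplus_{i=0}^a H^1(\PP^1,\OO(b-im))$ vanishes precisely when $b\geq am-1$.

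Now any irreducible component $D_i$ different from $C_0$ meets $C_0$ and $F$ nonnegatively, whence $a=D_i\cdot F\geq 0$ and $b=D_i\cdot C_0+am\geq am$; in particular $b\geq am-1$, so $H^1(X,\OO_X(D_i))=0$ and the section-lifting argument applies verbatim. Thus every component other than the negative section lifts to a flat $\wt{D_i}$.

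The main obstacle is precisely the negative section itself: for $m\geq 2$ one computes $H^1(X,\OO_X(C_0))=H^1(\PP^1,\OO\oplus\OO(-m))\neq 0$, so the cohomological lifting of its defining section is obstructed and the uniform argument breaks down. I would instead lift $C_0$ geometrically, using the bundle structure of $\wt X$: the quotient $\OO_{\PP^1_{W_2(k)}}\oplus\OO_{\PP^1_{W_2(k)}}(-m)\twoheadrightarrow\OO_{\PP^1_{W_2(k)}}(-m)$ determines a section $\wt\sigma:\PP^1_{W_2(k)}\ra\wt X$ of $\wt\pi$ whose image $\wt{C_0}$ is smooth (hence flat) over $W_2(k)$ and reduces to $C_0$. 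Assembling $\wt D=\sum\wt{D_i}$ from these lifts, with $Z=\emptyset$, then yields the required mixed lifting of $(X,D)$ to $W_2(k)$.
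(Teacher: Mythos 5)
Your proof is correct and follows essentially the same route as the paper's: take the natural lifting $\wt{X}$, reduce to irreducible components, lift $\OO_X(D_i)$ and then its defining section using $H^1(X,\OO_X(D_i))=0$ for every component meeting the negative section nonnegatively, and handle the negative section as a special case. The only differences are cosmetic: the paper lifts the line bundle via $H^2(X,\OO_X)=0$ and the sequence $0\ra\OO_X\ra\OO_{\wt X}^*\ra\OO_X^*\ra 1$ rather than via explicit Picard generators, proves the $H^1$ vanishing by induction on $a$ rather than by pushing forward along the ruling, and simply invokes the lift of the negative section as the unique negative curve on $\wt{X}$, where you construct it explicitly from the quotient $\OO_{\PP^1_{W_2(k)}}\oplus\OO_{\PP^1_{W_2(k)}}(-m)\twoheadrightarrow\OO_{\PP^1_{W_2(k)}}(-m)$ (arguably a cleaner justification of that step).
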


\begin{proof}
Since $X$ has a natural lifting $\wt{X}=\wt{H_m}=\PP(\OO_{\PP^1_{W_2(k)}}
\oplus\OO_{\PP^1_{W_2(k)}}(-m))$, we have only to lift the irreducible 
components of $D$ to $W_2(k)$ one by one. Thus we may assume that $D$ is 
irreducible. Let $f:X\ra\PP^1$ be the natural projection. Take a section 
$E$ of $f$ with $\OO_X(E)\cong\OO_X(1)$ and $E^2=-m\leq 0$.

If $D.E<0$ then we have $D=E$ and $E^2<0$. In this case, 
$D$ has a lifting $\wt{D}$, which is the unique curve on $\wt{X}$ 
with negative self-intersection. 

From now on, assume $D.E\geq 0$. The following exact sequence of 
abelian sheaves: 
\[ 0\ra \OO_X\stackrel{q}{\ra} \OO_{\wt{X}}^*\stackrel{r}{\ra} 
\OO_X^*\ra 1, \]
where $q(x):=1+px$ for $x\in\OO_X$, gives rise to the exact sequence 
$H^1(\wt{X},\OO_{\wt{X}}^*)\ra H^1(X,\OO_X^*)\ra H^2(X,\OO_X)=0$. 
Therefore, the invertible sheaf $\LL:=\OO_X(D)$ on $X$ extends to an 
invertible sheaf $\wt{\LL}$ on $\wt{X}$. Let $s\in H^0(X,\LL)$ be a 
section corresponding to the divisor $D$. Then lifting $D$ is nothing 
but to extend the section $s$ to a section $\wt{s}\in H^0(\wt{X},\wt{\LL})$. 
The long exact sequence associated to $0\ra \LL\ra \wt{\LL}\ra \LL\ra 0$ 
shows that it suffices to prove $H^1(X,\LL)=0$.

Write $D\sim aE+bF$, where $F$ is the fiber of $f$, $a\geq 0$ and $b\geq am$. 
We use induction on $a$ to prove that $H^1(X,\OO_X(aE+bF))=0$ holds for any 
$a\geq 0$ and $b\geq am$. When $a=0$, we have $H^1(X,\OO_X(bF))\cong 
H^1(\PP^1,\OO_{\PP^1}(b))=0$. Assume $a>0$. The exact sequence 
$H^1(X,\OO_X((a-1)E+bF))\ra H^1(X,\OO_X(aE+bF))\ra H^1(E,\OO_E(b-am))$ 
and the induction hypothesis conclude the argument.
\end{proof}

\begin{lem}\label{2.4}
Let $X$ be a smooth scheme over $k$, $D$ a reduced divisor on $X$, 
and $Z\subset X$ a closed subscheme smooth over $k$ of codimension 
$s\geq 2$. Let $\pi:X'\ra X$ be the blow-up of $X$ along $Z$ with the 
exceptional divisor $E$, $D'=\pi_*^{-1}D$ the strict transform of $D$. 
Assume that $(X,D+Z)$ admits a mixed lifting to $W_2(k)$. 
Then $(X',D'+E)$ admits a mixed lifting to $W_2(k)$.
\end{lem}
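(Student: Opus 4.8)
The plan is to produce the required mixed lifting of $(X',D'+E)$ by blowing up a given mixed lifting of $(X,D+Z)$ along the lift of the center. Let $(\wt{X},\wt{D}+\wt{Z})$ be a mixed lifting of $(X,D+Z)$ to $W_2(k)$, so that $\wt{X}$ and $\wt{Z}$ are both smooth over $W_2(k)$, each $\wt{D_i}$ is flat over $W_2(k)$, and restriction along $\Spec k\ra\Spec W_2(k)$ recovers $X$, $Z$ and the $D_i$. Let $\wt{\pi}:\wt{X}'\ra\wt{X}$ be the blow-up of $\wt{X}$ along $\wt{Z}$, with exceptional divisor $\wt{E}$, and let $\wt{D}'=\sum\wt{D}_i'$ be the strict transform of $\wt{D}$. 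The claim is that $(\wt{X}',\wt{D}'+\wt{E})$ is a mixed lifting of $(X',D'+E)$; note that here the codimension-$\geq 2$ part is empty, since $E$ is now a divisor.

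First I would verify the geometry of $\wt{X}'$ and $\wt{E}$ by a local computation. Since $\wt{X}$ and $\wt{Z}$ are both smooth over $W_2(k)$ with $\wt{Z}$ of relative codimension $s$, the closed immersion $\wt{Z}\hookrightarrow\wt{X}$ is a regular immersion, and \'{e}tale-locally on $\wt{X}$ one can choose coordinates making $\wt{X}$ \'{e}tale over $\A^n_{W_2(k)}$ with $\wt{Z}$ defined by $x_1=\cdots=x_s=0$. In such a chart the blow-up is the standard one, so $\wt{X}'$ is smooth over $W_2(k)$ and $\wt{E}$ is a $\PP^{s-1}$-bundle over $\wt{Z}$; in particular $\wt{E}$ is smooth over $W_2(k)$ and is an effective Cartier divisor on $\wt{X}'$.

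Next I would check compatibility with the base change to $k$. The key point is that the regular immersion $\wt{Z}\hookrightarrow\wt{X}$ is flat over $W_2(k)$, so its conormal sheaf is locally free, the Rees algebra $\bigoplus_{m\geq 0}\II_{\wt{Z}}^m$ is flat over $W_2(k)$, and its formation commutes with $-\otimes_{W_2(k)}k$, giving $\II_{\wt{Z}}^m\otimes_{W_2(k)}k=\II_Z^m$ for all $m$. Hence the blow-up commutes with the non-flat base change $\Spec k\ra\Spec W_2(k)$, and one obtains $\wt{X}'\times_{\Spec W_2(k)}\Spec k=X'$ together with $\wt{E}\times_{\Spec W_2(k)}\Spec k=E$.

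The hard part will be the strict transform $\wt{D}'$, since strict transform is \emph{not} stable under base change in general. Because $X$ is smooth and each $D_i$ is Cartier, flatness of $\wt{D_i}$ over $W_2(k)$ forces $\wt{D_i}$ to be an effective Cartier divisor on $\wt{X}$ that is relatively Cartier, \'{e}tale-locally of the form $V(\wt{f}_i)$ with $\wt{f}_i$ a lift of a local equation $f_i$ of $D_i$. The crux is to show that the multiplicity of $\wt{D_i}$ along $\wt{Z}$ equals the multiplicity $m_i$ of $D_i$ along $Z$; this is exactly where the flatness of the Rees algebra enters, since $\II_{\wt{Z}}^m\otimes_{W_2(k)}k=\II_Z^m$ detects the condition $\wt{f}_i\in\II_{\wt{Z}}^{m_i}\setminus\II_{\wt{Z}}^{m_i+1}$ from $f_i\in\II_Z^{m_i}\setminus\II_Z^{m_i+1}$. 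Granting this, writing $t$ for a local equation of $\wt{E}$, the local equation $\wt{f}_i/t^{m_i}$ of the strict transform reduces modulo $p$ to a local equation of $D_i'$, so $\wt{D}_i'$ is again an effective Cartier divisor, flat over $W_2(k)$, with $\wt{D}_i'\times_{\Spec W_2(k)}\Spec k=D_i'$. Assembling these facts shows that $(\wt{X}',\wt{D}'+\wt{E})$ is the desired mixed lifting of $(X',D'+E)$.
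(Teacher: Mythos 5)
Your construction is the same as the paper's: blow up the given mixed lifting $(\wt{X},\wt{D}+\wt{Z})$ along $\wt{Z}$ and check that $(\wt{X}',\wt{D}'+\wt{E})$ is a mixed lifting of $(X',D'+E)$. Your verification that $\wt{X}'$ is smooth over $W_2(k)$, that $\wt{E}$ lifts $E$, and that the blow-up commutes with the base change $\Spec k\ra \Spec W_2(k)$ (flatness of the Rees algebra, $\II_{\wt{Z}}^m\otimes_{W_2(k)}k=\II_Z^m$) is correct and corresponds to the paper's use of \cite[Corollary II.7.15]{ha} and the $\Proj$ description. The gap is exactly at the step you call the crux. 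The identity $\II_{\wt{Z}}^m\otimes_{W_2(k)}k=\II_Z^m$ only says that reduction mod $p$ maps $\II_{\wt{Z}}^m$ \emph{onto} $\II_Z^m$; hence $\wt{f}_i\in\II_{\wt{Z}}^m$ implies $f_i\in\II_Z^m$, i.e.\ $\mathrm{mult}_{\wt{Z}}\wt{D}_i\leq\mathrm{mult}_Z D_i$. It does \emph{not} give the implication you need ($f_i\in\II_Z^{m_i}\Rightarrow\wt{f}_i\in\II_{\wt{Z}}^{m_i}$): $f_i$ is the reduction of \emph{some} element $\wt{g}\in\II_{\wt{Z}}^{m_i}$, but your given equation $\wt{f}_i$ differs from $\wt{g}$ by $ph$ with $h$ arbitrary, and $p\OO_{\wt{X}}\not\subset\II_{\wt{Z}}$ (indeed $p\notin\II_{\wt{Z}}$, since $\OO_{\wt{Z}}$ is flat over $W_2(k)$). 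So the multiplicities need not match, and when they do not, the strict transform fails to be a lifting.

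Concretely: let $X=\A^2_k$, $Z$ the origin, $D=V(y)$, with mixed lifting $\wt{X}=\A^2_{W_2(k)}$, $\wt{Z}=V(x,y)$, $\wt{D}=V(y+p)$; this satisfies every condition of Definition \ref{2.2}, since $W_2(k)[x,y]/(y+p)\cong W_2(k)[x]$ is flat and reduces to $V(y)$. Here $\mathrm{mult}_Z D=1$ but $\wt{Z}\not\subset\wt{D}$, so $\mathrm{mult}_{\wt{Z}}\wt{D}=0$. In the chart of the blow-up where $y=xu$ and the exceptional divisor is $V(x)$, the total transform of $\wt{D}$ is $V(xu+p)$, and the strict transform (the blow-up of $\wt{D}$ along $\II_{\wt{Z}}\OO_{\wt{D}}$, equivalently the saturation of $(xu+p)$ with respect to $x$) is $V(xu+p,\,u^2)$, because $x^2u^2=(xu+p)(xu-p)$. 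Its reduction mod $p$ is $V(xu,u^2)$, which is $D'=V(u)$ together with an embedded point, and it is not flat over $W_2(k)$: since $pu=(xu+p)u-xu^2$, the class of $u$ is killed by $p$ but does not lie in $p\cdot\OO_{\wt{D}'}$. So for an arbitrary mixed lifting the blown-up family is \emph{not} a mixed lifting of $(X',D'+E)$; one must first arrange the lifting so that $\wt{Z}$ lies on the lifted components with the correct multiplicities. You should be aware that the paper's own proof is equally terse at this point --- it asserts ``it is easy to see that $\wt{D}'\times_{\Spec W_2(k)}\Spec k=D'$'' --- and is subject to the same example; in the application (Proposition \ref{2.6}) the difficulty is sidestepped because the lift $\wt{P_0}$ of the centre is chosen \emph{on} the lifted divisor $\wt{D^0}$. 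So you have correctly isolated the delicate point where strict transform and reduction mod $p$ can fail to commute, but the argument you give for it is invalid, and without an added compatibility condition on (or a modification of) the given lifting, this step remains unproved.
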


\begin{proof}
Let $(\wt{X},\wt{D}+\wt{Z})$ be a mixed lifting of $(X,D+Z)$ to $W_2(k)$. 
Then $\wt{Z}\subset\wt{X}$ is a closed subscheme smooth over $W_2(k)$ of 
codimension $s\geq 2$. Let $\wt{I}$ be the ideal sheaf of $\wt{Z}$ in 
$\wt{X}$, $\wt{\pi}:\wt{X}'\ra\wt{X}$ the blow-up of $\wt{X}$ along $\wt{Z}$ 
with the exceptional divisor $\wt{E}$, and $\wt{D}'=\wt{\pi}_*^{-1}\wt{D}$. 
By \cite[Corollary II.7.15]{ha}, we have the following commutative 
diagram:
\[
\xymatrix{
X'' \ar[d]_{\pi'} \ar@{^{(}->}[r] & \wt{X}' \ar[d]^{\wt{\pi}} \\
X \ar@{^{(}->}[r] & \wt{X} 
}
\]
where $\pi':X''\ra X$ is the blow-up of $X$ with respect to the ideal 
sheaf $\wt{I}\otimes_{W_2(k)}k=I$, the ideal sheaf of $Z$ in $X$. 
Hence $X''=X'$ and $\pi'=\pi$. Since $\wt{X}$ is smooth over $W_2(k)$, 
so is $\wt{X}'$. Note that $\wt{X}'\times_{\Spec W_2(k)}
\Spec k=\Proj(\oplus_i \wt{I}^i)\times_{\Spec W_2(k)}\Spec k
=\Proj(\oplus_i \wt{I}^i\otimes_{W_2(k)}k)=\Proj(\oplus_i I^i)=X'$,
so $\wt{X}'$ is a lifting of $X'$ to $W_2(k)$. It is easy to see that 
$\wt{D}'\times_{\Spec W_2(k)}\Spec k=D'$ and 
$\wt{E}\times_{\Spec W_2(k)}\Spec k=E$, hence 
$(X',D'+E)$ has a mixed lifting $(\wt{X}',\wt{D}'+\wt{E})$ to $W_2(k)$.
\end{proof}

\begin{defn}\label{2.5}
Let $X$ be a smooth projective surface, and $D$ a reduced divisor on $X$. 
$D$ is said to be suitable if there exists a birational morphism 
$f:X\ra X_{min}$ such that 

(1) $f$ is the composition of some $(-1)$-curve contractions,

(2) $X_{min}$ is a relatively minimal model, and 

(3) $D$ contains the exceptional locus $\Exc(f)$ of $f$.
\end{defn}

\begin{prop}\label{2.6}
Let $X$ be a smooth projective rational surface over $k$, 
$D=\sum_{j=1}^r D_j$ a suitable simple normal crossing divisor on $X$. 
Then $(X,D)$ admits a lifting to $W_2(k)$.
\end{prop}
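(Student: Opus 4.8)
The plan is to reduce the problem to a Hirzebruch surface and then rebuild the lifting of $(X,D)$ one blow-up at a time, alternating between Lemma \ref{2.3} and Lemma \ref{2.4}. Since $X$ is a smooth rational surface, its relatively minimal model $X_{min}$ from Definition \ref{2.5} is either $\PP^2$ or a Hirzebruch surface $H_m$, and the morphism $f\colon X\ra X_{min}$ factors as a chain of point blow-ups
\[ X=X_n\stackrel{\pi_n}{\ra}X_{n-1}\stackrel{\pi_{n-1}}{\ra}\cdots\stackrel{\pi_1}{\ra}X_0=X_{min}, \]
where each $\pi_i$ blows up a closed point $P_i\in X_{i-1}$ with exceptional curve $E_i\subset X_i$. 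The key observation is that the suitability hypothesis $D\supseteq\Exc(f)$ (Definition \ref{2.5}(3)) says precisely that $D$ equals the strict transform in $X$ of $D_0:=f_*D$ together with all the exceptional curves of $f$; no component of $D$ is unaccounted for. I therefore set $D^{(i)}$ to be the strict transform of $D_0$ in $X_i$ plus the strict transforms of $E_1,\dots,E_i$, so that $D^{(0)}=D_0$ on $X_{min}$, each $D^{(i)}$ is a reduced divisor on $X_i$, and $D^{(n)}=D$ on $X$.

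I would first reduce to the case that the base of the chain is a Hirzebruch surface. If $X_{min}=\PP^2$ and $f$ is an isomorphism, then $X=\PP^2$ and $D=D_0$ is cut out by a single form, which lifts coefficientwise to give a lifting of $(\PP^2,D)$ to $W_2(k)$, and we are done. Otherwise, if $X_{min}=\PP^2$ and $f$ is not an isomorphism, then $X_1$ is the blow-up of $\PP^2$ at one point, i.e.\ $X_1=H_1$, and I would discard the first blow-up and take $H_1$ as the new base of the chain. In all remaining cases the base of the chain is a Hirzebruch surface, and Lemma \ref{2.3} provides a lifting of the corresponding base pair; note that the base divisor need not be simple normal crossing, but Lemma \ref{2.3} applies to an arbitrary reduced divisor, which is exactly why pushing $D$ down to the minimal model causes no trouble.

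The engine of the induction is the passage from a lifting to a mixed lifting and back. Given a lifting $(\wt X_i,\wt{D^{(i)}})$ of the current stage and the next center $P_{i+1}\in X_i$, formal smoothness of $\wt X_i$ over $W_2(k)$ lets me lift the closed point $P_{i+1}$ to a $W_2(k)$-point $\wt P_{i+1}\subset\wt X_i$, producing a mixed lifting of $(X_i,D^{(i)}+P_{i+1})$. Lemma \ref{2.4}, applied with $Z=P_{i+1}$ (a smooth center of codimension $2$), then yields a mixed lifting of $(X_{i+1},(\pi_{i+1})_*^{-1}D^{(i)}+E_{i+1})$; by the matching of divisors recorded above this is exactly a lifting of $(X_{i+1},D^{(i+1)})$, and the induction continues up the chain. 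At the final stage there is no further center to adjoin, and the resulting lifting of $(X_n,D^{(n)})=(X,D)$ is the desired lifting to $W_2(k)$.

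I expect the genuine content to be the bookkeeping rather than any single hard estimate: the main point is to verify that suitability is exactly the hypothesis making $D=D^{(n)}$, so that the successive applications of Lemma \ref{2.4} reconstruct every component of $D$—in particular all of $\Exc(f)$—while the strict transform of $D_0$ is carried along unchanged. The two supporting facts, that Lemma \ref{2.3} handles the possibly singular pushforward $D_0$ and that a closed point always lifts along a smooth $W_2(k)$-scheme, are comparatively routine, so the only real care is needed in the $\PP^2$ reduction and in checking that the divisors $D^{(i)}$ match up at each blow-up.
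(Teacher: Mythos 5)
Your global strategy---push $D$ down to a Hirzebruch surface, lift the resulting (possibly non-SNC) reduced divisor by Lemma \ref{2.3}, then climb back up the chain of point blow-ups via Lemma \ref{2.4}, with $D^{(i)}=\pi_{i*}D$---is the same as the paper's, and your bookkeeping (that $D^{(i+1)}$ is the strict transform of $D^{(i)}$ plus $E_{i+1}$, and that suitability makes $D^{(n)}=D$) is correct. The genuine gap is in the step you dismiss as routine: you lift the center $P_{i+1}$ to a $W_2(k)$-point $\wt{P}_{i+1}\subset\wt{X}_i$ using only formal smoothness of the ambient surface, i.e.\ you take an \emph{arbitrary} lift, with no compatibility with $\wt{D^{(i)}}$. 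Definition \ref{2.2} indeed imposes no incidence between $\wt{Z}$ and $\wt{D}$, but the construction underlying Lemma \ref{2.4} (blow up $\wt{X}_i$ along $\wt{P}_{i+1}$ and take the strict transform of $\wt{D^{(i)}}$) returns a lifting of $(X_{i+1},D^{(i+1)})$ only if the lifted point is chosen compatibly with the lifted divisor; otherwise the claim $\wt{D}'\times_{\Spec W_2(k)}\Spec k=D'$ inside the proof of Lemma \ref{2.4} simply fails. Concretely: let $D=V(y)\subset\A^2_k$, let $P$ be the origin, and take the mixed lifting $\wt{D}=V(y)$, $\wt{P}=V(x-p,\,y-p)$ of $(\A^2_k,D+P)$; this satisfies Definition \ref{2.2}, since $\wt{P}\cong\Spec W_2(k)$ is smooth over $W_2(k)$ and reduces to $P$. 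In the blow-up of $\A^2_{W_2(k)}$ along $\wt{P}$, in the chart $y-p=v(x-p)$ the strict transform of $\wt{D}$ is $V(vx-vp+p)$ (it does not contain the exceptional divisor $V(x-p)$, because $v(x-p)+p$ restricts to $p\neq 0$ there), and its reduction mod $p$ is $V(vx)$: the strict transform of $D$ \emph{plus} the exceptional curve. So the object your induction step produces is not a mixed lifting of $(X_{i+1},D^{(i+1)})$---the putative lift of $D'$ has the total transform as special fibre---and the induction collapses. This situation is unavoidable in your setting: the centers must in general lie on the divisor (for instance, any center infinitely near an earlier one lies on an exceptional curve, which is a component of $D^{(i)}$ by suitability), so the defect occurs at essentially every stage. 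Note also that you cannot hide behind the literal, purely existential wording of Lemma \ref{2.4}: its only proof is the blow-up construction, which requires the compatible choice, and the paper's own use of it in Proposition \ref{2.6} makes that choice explicitly.

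This incidence condition is exactly what the paper's proof takes care of, and it is where the real content lies. When $P_i\in D^i$, the paper chooses the lift $\wt{P_i}$ \emph{on} $\wt{D^i}$, via the closed immersion $\eta:D^i\hookrightarrow\wt{D^i}$ (in local coordinates, a lift $x=\wt{x_0}$, $y=\wt{y_0}$ of the equations of $P_i$ chosen so that the lifted point satisfies the lifted equations of the branches of $D^i$ through $P_i$); and it treats separately the case $P_i\notin D^i$, where an arbitrary lift is harmless because $X_i\hookrightarrow\wt{X}_i$ is a homeomorphism, so $\wt{P_i}$ is then automatically disjoint from $\wt{D^i}$. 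To repair your argument you must make this case distinction and, when $P_{i+1}\in D^{(i)}$, construct the $W_2(k)$-point inside $\wt{D^{(i)}}$, compatibly with every branch of $D^{(i)}$ through $P_{i+1}$; only then does the blow-up construction of Lemma \ref{2.4} yield a lifting of $(X_{i+1},D^{(i+1)})$, and hence, at the top of the chain, of $(X,D)$.
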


\begin{proof}
If $\rho(X)=1$, then $X\cong\PP^2_k$ and the conclusion is obvious. 
From now on, we may assume $\rho(X)\geq 2$. By assumption, 
there is a sequence of $(-1)$-curve contractions:
\[ X=X_n\stackrel{(-1)}{\longrightarrow} X_{n-1}\stackrel{(-1)}
{\longrightarrow}\cdots\stackrel{(-1)}{\longrightarrow} X_1
\stackrel{(-1)}{\longrightarrow} X_0, \]
where $X_0$ is a Hirzebruch surface, say $H_m$ with $m\geq 0$. 

Let $E_i\subset X_i$ be the corresponding $(-1)$-curves whose 
images are the smooth closed points $P_{i-1}\in X_{i-1}$ ($1\leq i\leq n$), 
$\pi_i: X\ra X_i$ the induced morphisms ($0\leq i\leq n$), and 
$E_i'=\pi_{i*}^{-1}E_i$ the strict transforms on $X$ ($1\leq i\leq n$). 
By assumption, $\sum_{i=1}^n E_i'$ is contained in $D=\sum_{j=1}^r D_j$. 
Let $D^i=\pi_{i*}D$, $0\leq i\leq n-1$. Then in general the irreducible 
components of $D^0$ are neither smooth nor intersect transversally.

First of all, we assume $P_i\in D^i$ for all $0\leq i\leq n-1$. 
Then $\pi_0:D\subset X\ra D^0\subset X_0$ is a procedure consisting 
of a sequence of one point blow-ups such that the support of the total 
transform of $D^0$ is equal to the support of $D$, which is simple 
normal crossing. 

By Lemma \ref{2.3}, $(X_0,D^0)$ has a mixed lifting $(\wt{X_0},\wt{D^0})$ 
to $W_2(k)$. Let $\eta:D^0\hookrightarrow\wt{D^0}$ be the induced closed 
immersion, and let $\wt{P_0}=\eta(P_0)\in\wt{D^0}$. 
If $P_0\in X_0$ is locally defined by equations $x=x_0,y=y_0$, 
then $\wt{P_0}$ is locally defined by equations $x=\wt{x_0},y=\wt{y_0}$ 
with $r(\wt{x_0})=x_0$, $r(\wt{y_0})=y_0$, where $x_0,y_0\in k$, $\wt{x_0},
\wt{y_0}\in W_2(k)$. Therefore $(X_0,D^0+P_0)$ has a mixed lifting 
$(\wt{X_0},\wt{D^0}+\wt{P_0})$ to $W_2(k)$. 
By Lemma \ref{2.4}, $(X_1,D^1)$ has a mixed lifting $(\wt{X_1},\wt{D^1})$ 
to $W_2(k)$. We can repeat the same argument as above and use the induction 
on $n$ to prove that $(X,D)$ has a mixed lifting ($\wt{X},\wt{D}$) to 
$W_2(k)$, which is indeed a lifting of $(X,D)$ to $W_2(k)$.

In general, if $P_i\not\in D^i$ for some $i$, then $P_i$ is isolated from 
$D^0$ (we denote the image of $P_i$ in $X_0$ by the same symbol), and we 
can further prove that $(X_0,P_i)$ has a mixed lifting to $W_2(k)$, 
hence so does $(X_i,D^i+P_i)$. The rest is the same as above.
\end{proof}

\section{Proof of the main theorem}\label{S3}

The following vanishing result \cite[Corollary 2.2.5]{kk} is useful, 
which holds in arbitrary characteristic.

\begin{lem}\label{3.1}
Let $h: Y\ra X$ be a proper birational morphism between normal surfaces 
with $Y$ smooth and with exceptional locus $E=\cup_{i=1}^s E_i$. 
Let $L$ be an integral divisor on $Y$, $0\leq b_1,\cdots,b_s<1$ 
rational numbers, and $N$ an $h$-nef $\Q$-divisor on $Y$. Assume 
$ L\equiv K_Y+\sum_{i=1}^s b_iE_i+N$. Then $R^1h_*\OO_Y(L)=0$ holds.
\end{lem}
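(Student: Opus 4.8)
The plan is to prove the vanishing \emph{locally on $X$} and \emph{fibre by fibre}, using only intersection theory on the smooth surface $Y$, so that no characteristic hypothesis ever enters. Since the assertion is that a coherent sheaf vanishes, it is local on $X$; away from the finite set $h(E)$ the morphism $h$ is an isomorphism and $R^1h_*\OO_Y(L)$ is already zero, so I would replace $X$ by the spectrum of the local ring at a point of $h(E)$. The fibres of $h$ have dimension $\le 1$, so $R^ih_*=0$ for $i\ge 2$ and only $R^1$ is at stake. By the theorem on formal functions, the completion of $R^1h_*\OO_Y(L)$ at such a point is $\varprojlim_Z H^1\!\big(Z,\OO_Y(L)|_Z\big)$, the limit running over the infinitesimal thickenings of the fibre; hence it suffices to prove
\[ H^1\!\big(Z,\OO_Y(L)|_Z\big)=0 \quad\text{for every effective divisor } Z \text{ with } \Supp Z\subseteq E. \]

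To establish this I would build $Z=\sum n_iE_i$ up from the empty divisor by adding one component at a time. Writing $Z=Z'+E_j$ with $Z'$ the part already built, the short exact sequence
\[ 0\ra \OO_Y(L-Z')|_{E_j}\ra \OO_Y(L)|_Z\ra \OO_Y(L)|_{Z'}\ra 0 \]
together with its long exact cohomology sequence reduces the claim, via the inductive hypothesis $H^1(Z',\OO_Y(L)|_{Z'})=0$, to the single-curve vanishing $H^1\!\big(E_j,\OO_Y(L-Z')|_{E_j}\big)=0$. On the curve $E_j$ this holds as soon as the degree satisfies $(L-Z')\cdot E_j>2p_a(E_j)-2$, where $p_a$ denotes arithmetic genus.

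The numerical hypotheses feed in exactly here. Using $L\equiv K_Y+\sum_i b_iE_i+N$, the adjunction identity $K_Y\cdot E_j=2p_a(E_j)-2-E_j^2$ on the smooth surface $Y$, and $N\cdot E_j\ge 0$ (because $N$ is $h$-nef and $E_j$ is $h$-exceptional), one computes
\[ (L-Z')\cdot E_j = 2p_a(E_j)-2-E_j^2+\sum_i b_i\,(E_i\cdot E_j)+N\cdot E_j-Z'\cdot E_j, \]
so the required inequality $(L-Z')\cdot E_j>2p_a(E_j)-2$ is equivalent to
\[ (1-b_j)(-E_j^2)+\sum_{i\neq j}b_i\,(E_i\cdot E_j)+N\cdot E_j-Z'\cdot E_j>0. \]
Since $E_j$ is $h$-exceptional, $E_j^2<0$ and the first term is \emph{strictly} positive by the key hypothesis $b_j<1$; the cross terms $E_i\cdot E_j$ ($i\neq j$) and $N\cdot E_j$ are nonnegative. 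Thus whenever the already-built part meets $E_j$ non-positively, i.e.\ $Z'\cdot E_j\le 0$, the inequality holds and the inductive step goes through, with all of the slack supplied by the strict bound $b_j<1$.

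The main obstacle is producing, for an arbitrary thickening $Z$, a \emph{build-up order} making the intermediate quantities $Z'\cdot E_j$ controllable at every step; one cannot simply demand $Z'\cdot E_j\le 0$ at each stage, since distinct components meet non-negatively. I expect to resolve this by invoking the negative-definiteness of the exceptional intersection matrix (Mumford's contractibility criterion) and constructing a computation sequence in the sense of Artin's fundamental cycle, which orders the additions so that the positive slack $(1-b_j)(-E_j^2)+\sum_{i\neq j}b_i(E_i\cdot E_j)+N\cdot E_j$ dominates $Z'\cdot E_j$ throughout. Verifying that such an order always exists—for an arbitrary effective $Z$ supported on $E$ and for exceptional curves $E_i$ that need not be smooth rational—is the delicate combinatorial heart of the argument; it is precisely the negative-definiteness of the configuration together with the strict inequalities $b_i<1$ that make it succeed in every characteristic.
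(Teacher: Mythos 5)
The paper never proves Lemma \ref{3.1} at all: it is imported as a black box from \cite[Corollary 2.2.5]{kk}, so your proposal has to be measured against the standard proof of that cited result --- which is exactly the route you take. Your skeleton is correct at every step you actually carry out: localizing on $X$ and invoking the theorem on formal functions; the decomposition sequence $0\ra \OO_{E_j}(L-Z')\ra \OO_Z(L)\ra \OO_{Z'}(L)\ra 0$ for $Z=Z'+E_j$; the characteristic-free vanishing criterion $H^1(E_j,M)=0$ whenever $\deg M>2p_a(E_j)-2$ (Serre duality on the integral Gorenstein curve $E_j$, plus the fact that a line bundle of negative degree on an integral curve has no sections --- so singular, non-rational $E_j$ are fine); and the adjunction computation that converts the needed degree bound into $(1-b_j)(-E_j^2)+\sum_{i\neq j}b_i(E_i\cdot E_j)+N\cdot E_j>Z'\cdot E_j$.

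The one genuine gap is the step you defer: you assert, but do not prove, that a build-up order with this property always exists, and you predict it requires a ``delicate'' Artin-type computation sequence. In fact no such machinery is needed, and the gap closes in three lines once you run the induction \emph{downward} (choose the last curve to be added, rather than constructing the whole order in advance). Induct on the total multiplicity of $Z$. Given $0<Z=\sum n_iE_i$ supported on $E$, set
\[ F \;=\; Z-\sum_{i=1}^s b_iE_i \;=\; F^+-F^-,\qquad F^+=\sum_{n_i\geq 1}(n_i-b_i)E_i,\qquad F^-=\sum_{n_i=0}b_iE_i, \]
so that $F^+$ and $F^-$ are effective with no common component, and $F^+\neq 0$ precisely because every $b_i<1$. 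By negative definiteness of the exceptional intersection matrix (valid in every characteristic) we have $(F^+)^2<0$, hence some component $E_j$ of $F^+$ --- i.e.\ some $E_j$ with $n_j\geq 1$ --- satisfies $F^+\cdot E_j<0$; since $E_j$ is not a component of $F^-$, also $F\cdot E_j\leq F^+\cdot E_j<0$. Expanding $F\cdot E_j<0$ with $Z=Z'+E_j$, $Z'=Z-E_j$, and throwing in $N\cdot E_j\geq 0$, gives exactly your inequality $(1-b_j)(-E_j^2)+\sum_{i\neq j}b_i(E_i\cdot E_j)+N\cdot E_j>Z'\cdot E_j$. So $H^1(E_j,\OO_Y(L-Z')|_{E_j})=0$, the inductive hypothesis applies to $Z'$, and the long exact sequence kills $H^1(Z,\OO_Y(L)|_Z)$. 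Thus your ``slack dominates $Z'\cdot E_j$'' formulation is correct and the ordering always exists (peel curves off $Z$ repeatedly by this recipe); the strictness $b_j<1$ enters exactly where you said, namely in making $F^+$ a nonzero effective cycle to which negative definiteness can be applied. With this lemma inserted, your proof is complete and is essentially the proof of the result the paper cites.
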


We can use Lemma \ref{3.1} to show that the KLT surface singularity 
is rational in positive characteristic, while the general statement 
that the KLT singularity is rational in characteristic zero has been 
proved in \cite[Theorem 5.22]{km}.

\begin{lem}\label{3.4}
Let $X$ be a normal proper surface, and $B$ an effective $\Q$-divisor 
on $X$ such that $(X,B)$ is KLT. Then $X$ has only rational singularities.
\end{lem}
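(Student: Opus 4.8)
The plan is to reduce the vanishing $R^1h_*\OO_Y(L)=0$ for a suitable resolution to an application of Lemma \ref{3.1}, and then use the degeneration of the Leray spectral sequence to conclude that $X$ has rational singularities. First I would take a log resolution $h:Y\ra X$ of the pair $(X,B)$, so that $Y$ is smooth and the exceptional locus $E=\cup_{i=1}^s E_i$ together with the strict transform of $B$ has simple normal crossing support. By the definition of rational singularities for a normal surface, it suffices to prove $R^1h_*\OO_Y=0$; the assertion $h_*\OO_Y=\OO_X$ is automatic from normality, and there are no higher direct images on a surface.

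The key step is to write $\OO_Y$ in the numerical form required by Lemma \ref{3.1}. Because $(X,B)$ is KLT, I would use the ramification formula $K_Y\equiv h^*(K_X+B)+\sum_i a_i E_i-h_*^{-1}B$, where the discrepancies satisfy $a_i>-1$ for all exceptional $E_i$. Rearranging gives $0\equiv K_Y-\sum_i a_i E_i-N'$ for a suitable divisor $N'$ pulled back from $X$; the goal is to massage this into the shape $L\equiv K_Y+\sum_i b_i E_i+N$ with $L=0$ integral, coefficients $0\leq b_i<1$, and $N$ an $h$-nef $\Q$-divisor. The point is that $-K_Y\equiv \sum_i(-a_i)E_i + (\text{pullback from }X)$, and since $-1<a_i$, the coefficients $-a_i$ are $<1$; after discarding the nonnegative integral parts into $L$ and folding the $h$-trivial pullback term (which is certainly $h$-nef) into $N$, the hypotheses of Lemma \ref{3.1} are met.

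The main obstacle I expect is the bookkeeping that turns the discrepancy coefficients into numbers lying in the half-open interval $[0,1)$ while keeping $L$ an \emph{effective integral} divisor and $N$ genuinely \emph{$h$-nef}. Concretely, the discrepancies $a_i$ can be negative, so $-a_i$ may exceed $1$ or fail to be fractional; one must split each coefficient as $-a_i=\ulcorner -a_i\urcorner-\langle\cdot\rangle$ and absorb the integer parts correctly so that the residual coefficients land in $[0,1)$, which is exactly where the strict inequality $a_i>-1$ from the KLT hypothesis is used. Once this is arranged and Lemma \ref{3.1} yields $R^1h_*\OO_Y(L)=0$ with $L=0$, I would finish by invoking the Leray spectral sequence for $h$ and the fact that $X$ is normal proper of dimension two, concluding $R^1h_*\OO_Y=0$ and hence that the singularities of $X$ are rational.
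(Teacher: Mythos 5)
There is a genuine gap, and it comes from your choice of resolution. You take an arbitrary \emph{log} resolution $h:Y\ra X$, but on such a resolution the discrepancies $a_i$ can be strictly positive (indeed positive integers: any blow-up of a smooth point already gives $a_i=1$, and iterated blow-ups give larger values). Note first that your diagnosis of the bookkeeping problem is backwards: since KLT gives $a_i>-1$, the number $-a_i$ can \emph{never} exceed $1$; the real danger is that $-a_i<0$ when $a_i>0$, which is unavoidable on a non-minimal resolution. Your proposed fix --- splitting $-a_i$ into integer and fractional parts and ``discarding the nonnegative integral parts into $L$'' --- then produces $L=\sum_i\ulcorner a_i\urcorner E_i$, a \emph{nonzero} effective exceptional divisor whenever some $a_i>0$. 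Lemma \ref{3.1} then yields $R^1h_*\OO_Y(L)=0$, but this does not imply $R^1h_*\OO_Y=0$: from $0\ra\OO_Y\ra\OO_Y(L)\ra Q\ra 0$ one only gets a surjection $h_*Q\ra R^1h_*\OO_Y$, which gives no vanishing. So your final sentence, ``Lemma \ref{3.1} yields $R^1h_*\OO_Y(L)=0$ with $L=0$,'' contradicts your own construction: on a log resolution you cannot arrange both $L=0$ and all residual coefficients in $[0,1)$ by mere rounding.

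The paper avoids this entirely by taking the \emph{minimal} resolution $h:Y\ra X$ instead. There one has $K_Y\equiv h^*K_X+\sum_i a_iE_i$ with $-1<a_i\leq 0$ (non-positivity of discrepancies is automatic for the minimal resolution), and $h_*^{-1}B\equiv h^*B+\sum_i c_iE_i$ with $c_i\leq 0$; hence $K_Y+h_*^{-1}B\equiv h^*(K_X+B)+\sum_i b_iE_i$ with $b_i=a_i+c_i\in(-1,0]$ by KLT. Then
\[ 0\equiv K_Y+\sum_i(-b_i)E_i+\bigl(h_*^{-1}B-h^*(K_X+B)\bigr) \]
has all coefficients $-b_i\in[0,1)$ with no rounding needed, the divisor $N=h_*^{-1}B-h^*(K_X+B)$ is $h$-nef (the strict transform has no exceptional components and the pullback is $h$-trivial), and Lemma \ref{3.1} applies with $L=0$ on the nose. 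If you insist on a log resolution, you would instead need to \emph{choose} coefficients $b_i\in[0,1)$ making $-(K_Y+\sum_ib_iE_i)$ $h$-nef, which is a nontrivial claim requiring a separate argument; passing to the minimal resolution is exactly the device that makes the proof one line. (A smaller point: the paper also records $R^1h_*\omega_Y=0$, which is part of the definition of rational singularities it uses; your proposal omits this.)
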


\begin{proof}
Let $h: Y\ra X$ be the minimal resolution of $X$. Write 
$K_Y \equiv h^*K_X+\sum_{i=1}^s a_iE_i$ with $-1<a_i\leq 0$ 
for all $i$, and $h_*^{-1}B \equiv h^*B+\sum_{i=1}^s c_iE_i$ 
with $c_i\leq 0$ for all $i$. Hence we have $K_Y+h_*^{-1}B 
\equiv h^*(K_X+B)+\sum_{i=1}^s b_iE_i$ with $b_i=a_i+c_i\leq 0$. 
Since $(X,B)$ is KLT, $b_i>-1$ holds for all $i$. Since 
$0 \equiv K_Y+\sum_{i=1}^s (-b_i)E_i+h_*^{-1}B-h^*(K_X+B)$, 
by Lemma \ref{3.1}, we have $R^1h_*\OO_Y=0$. It is easy to 
see that $R^1h_*\omega_Y=0$ holds.
\end{proof}

\begin{proof}[Proof of Theorem \ref{1.4}]
Take a log resolution $h: Y\ra X$ such that 

(1) $Y$ is a smooth projective rational surface over $k$, 
and we can write $K_Y+h_*^{-1}B \equiv h^*(K_X+B)+\sum_ia_iE_i$, 
where $E_i$ are the exceptional curves of $h$ and $a_i>-1$ 
for all $i$.

(2) $G=\Supp(h_*^{-1}B)\cup\Exc(h)\cup($some self-intersection 
negative curves on $Y$) is suitable and simple normal crossing.

Let $D_Y=\ulcorner h^*D+\sum_ia_iE_i\urcorner$. Since 
$\ulcorner \sum_ia_iE_i\urcorner\geq 0$ is supported by $\Exc(h)$, 
we have $h_*\OO_Y(D_Y)=\OO_X(D)$ by the projection formula. 
Since $\{h^*D+\sum_ia_iE_i\}$ is supported by $\Exc(h)$, 
we can take $0<\delta_i\ll 1$ such that

(1) $\bigl{[}h_*^{-1}B+\{h^*D+\sum_ia_iE_i\}+
\sum_i\delta_iE_i\bigr{]}=0$.

(2) $D_Y-(K_Y+h_*^{-1}B+\{h^*D+\sum_ia_iE_i\}+
\sum_i\delta_iE_i) \equiv h^*(D-(K_X+B))-\sum_i\delta_iE_i$ is ample.

Let $B_Y=h_*^{-1}B+\{h^*D+\sum_ia_iE_i\}+\sum_i\delta_iE_i$. 
Then $H_Y=D_Y-(K_Y+B_Y)$ is ample, $\Supp(\langle H_Y\rangle)=\Supp(B_Y)$ 
is simple normal crossing, and $K_Y+\ulcorner H_Y\urcorner=D_Y$. Note that
\[ D_Y \equiv K_Y+\{h^*D+\sum_ia_iE_i\}+h^*(D-(K_X+B))
+h_*^{-1}B. \]
By Lemma \ref{3.1}, we have $R^1h_*\OO_Y(D_Y)=0$, hence 
$H^1(Y,D_Y)=H^1(X,h_*\OO_Y(D_Y))=H^1(X,D)$. 

Since $G$ is a suitable simple normal crossing divisor on the smooth 
rational surface $Y$, by Proposition \ref{2.6}, $(Y,G)$ admits a lifting 
to $W_2(k)$. Since $G$ contains $\Supp(\langle H_Y\rangle)$, we have 
$H^1(X,D)=H^1(Y,D_Y)=H^1(Y,K_Y+\ulcorner H_Y\urcorner)=0$ 
by Theorem \ref{1.3}.
\end{proof}

\begin{proof}[Proof of Corollary \ref{1.6}]
It follows from the cone theorem \cite[2.1.1 and 2.1.4]{kk} that the 
Kleiman-Mori cone $\overline{NE}(X)$ is generated by rational curves. 
By Lemma \ref{3.4}, $X$ has only rational singularities, therefore 
$X$ is rational. The rest is due to Theorem \ref{1.4}.
\end{proof}

\begin{cor}\label{3.2}
Let $X$ be a weak log del Pezzo surface, and $D$ a $\Q$-Cartier Weil 
divisor on $X$ such that $D-K_X$ is nef and big. Then $H^1(X,D)=0$ holds.
\end{cor}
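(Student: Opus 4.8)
The plan is to reduce the statement to Theorem \ref{1.4}, which requires three inputs: that $X$ is rational, that there is an effective $\Q$-divisor $B$ with $(X,B)$ KLT, and that $D-(K_X+B)$ is ample. I would produce the last two inputs by a Kodaira-type perturbation of the nef and big divisor $D-K_X$, and I would produce rationality by a separate perturbation of $-K_X$.

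First, since $D-K_X$ is nef and big, Kodaira's lemma — valid in arbitrary characteristic, being a consequence of asymptotic Riemann--Roch — lets me write $D-K_X\equiv A+F$ with $A$ an ample $\Q$-divisor and $F$ an effective $\Q$-divisor. Then for $0<\epsilon<1$ one has $(D-K_X)-\epsilon F\equiv(1-\epsilon)(D-K_X)+\epsilon A$, a sum of a nef class and an ample class, hence ample. Choosing $\epsilon$ small enough that in addition $(X,\epsilon F)$ is KLT — possible because $(X,0)$ is KLT and being KLT is an open condition on the coefficients, so it persists for small $\epsilon$ — I set $B=\epsilon F$. This yields an effective $B$ with $(X,B)$ KLT and $D-(K_X+B)$ ample.

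Second, I must show $X$ is rational, and here I have to be careful not to invoke Corollary \ref{1.7}, which is itself a consequence of the present statement (take $D=0$). Instead I apply the same perturbation trick to $-K_X$: since $-K_X$ is nef and big, the argument above produces an effective $\Q$-divisor $B_0$ with $(X,B_0)$ KLT and $-(K_X+B_0)$ ample, so that $(X,B_0)$ is a log del Pezzo surface in the sense of Definition \ref{1.5}. The rationality of $X$ then follows exactly as in the proof of Corollary \ref{1.6}: the cone theorem shows that $\overline{NE}(X)$ is generated by rational curves, Lemma \ref{3.4} shows that $X$ has only rational singularities, and together these force $X$ to be rational.

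Finally, with $X$ rational, $(X,B)$ KLT, and $D-(K_X+B)$ ample, Theorem \ref{1.4} gives $H^1(X,D)=0$. The genuinely delicate point is the second step, and more precisely the decision to split the argument. The naive hope of reducing directly to Corollary \ref{1.6} by a single perturbation fails, because the effective divisor $F$ that makes $D-(K_X+B)$ ample need not make $-(K_X+B)$ ample simultaneously: the two Kodaira directions can conflict along curves on which $-K_X$ (or $D-K_X$) is numerically trivial. Using two independent perturbations — one to feed the vanishing hypothesis of Theorem \ref{1.4}, and one used solely to extract the rationality of $X$ — is what makes the proof go through cleanly.
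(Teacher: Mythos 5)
Your proof is correct and follows essentially the same route as the paper: the paper's proof likewise takes two separate perturbations, one divisor $B_1$ with $-(K_X+B_1)$ ample to get rationality via the argument of Corollary \ref{1.6}, and another $B_2$ with $D-(K_X+B_2)$ ample to feed into Theorem \ref{1.4}. The only difference is that you spell out, via Kodaira's lemma and openness of the KLT condition, how such perturbations are constructed, which the paper leaves implicit.
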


\begin{proof}
Take an effective $\Q$-divisor $B_1$ such that $(X,B_1)$ is KLT 
and $-(K_X+B_1)$ is ample. Then $X$ is rational by the same argument 
as above. Take another effective $\Q$-divisor $B_2$ such that 
$(X,B_2)$ is KLT and $D-(K_X+B_2)$ is ample. The rest is due to 
Theorem \ref{1.4}.
\end{proof}

\begin{proof}[Proof of Corollary \ref{1.7}]
It follows from Theorem \ref{1.4} or Corollary \ref{3.2}.
\end{proof}

\begin{cor}\label{3.3}
Let $X$ be a smooth projective rational surface, $f:X\ra\PP^1$ a surjective 
projective morphism, and $H$ an $f$-ample $\Q$-divisor on $X$ such that 
the fractional part $\langle H\rangle$ has simple normal crossing support. 
Then $R^1f_*\OO_X(K_X+\ulcorner H\urcorner)=0$ holds.
\end{cor}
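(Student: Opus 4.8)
The plan is to deduce this relative vanishing from the absolute Kawamata--Viehweg vanishing of Theorem \ref{1.4} by twisting with a sufficiently positive pullback from $\PP^1$. Since $R^1f_*\OO_X(K_X+\ulcorner H\urcorner)$ is a coherent sheaf on the curve $\PP^1$, it vanishes as soon as $H^0(\PP^1,(R^1f_*\OO_X(K_X+\ulcorner H\urcorner))(n))=0$ for some $n$ large enough that this twist is globally generated (Serre). So it suffices to establish this vanishing of global sections for all $n\gg 0$.

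First I would fix a fiber $F$ of $f$, so that $f^*\OO_{\PP^1}(1)\cong\OO_X(F)$, and set $\mathcal{G}_n=\OO_X(K_X+\ulcorner H\urcorner+nF)$. By the projection formula, $R^1f_*\mathcal{G}_n\cong(R^1f_*\OO_X(K_X+\ulcorner H\urcorner))(n)$. Because $\PP^1$ is a curve, the Leray spectral sequence for $f$ reduces to the five-term exact sequence, whose relevant segment reads
\[ H^1(X,\mathcal{G}_n)\ra H^0(\PP^1,R^1f_*\mathcal{G}_n)\ra H^2(\PP^1,f_*\mathcal{G}_n)=0. \]
Hence $H^0(\PP^1,(R^1f_*\OO_X(K_X+\ulcorner H\urcorner))(n))$ is a quotient of $H^1(X,\mathcal{G}_n)$, and it is enough to prove that $H^1(X,K_X+\ulcorner H\urcorner+nF)=0$ for all $n\gg 0$.

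Next I would set $H'=H+nF$. Since $nF$ is integral, $\ulcorner H'\urcorner=\ulcorner H\urcorner+nF$ while $\langle H'\rangle=\langle H\rangle$ still has simple normal crossing support. As $H$ is $f$-ample and $\OO_{\PP^1}(1)$ is ample, the divisor $H'=H+n\,f^*\OO_{\PP^1}(1)$ is ample on $X$ for all $n\gg 0$, by the standard fact that an $f$-ample divisor plus a large multiple of the pullback of an ample divisor is ample. Now put $D=K_X+\ulcorner H'\urcorner$ and $B=\{H'\}$, the upper fractional part. Then $B$ is effective with coefficients in $[0,1)$, supported on $\Supp\langle H\rangle$, so $(X,B)$ is KLT because $X$ is smooth; moreover $D$ is an integral (hence $\Q$-Cartier Weil) divisor, and $D-(K_X+B)=\ulcorner H'\urcorner-\{H'\}=H'$ is ample. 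Applying Theorem \ref{1.4} to the rational surface $X$ then gives $H^1(X,D)=H^1(X,K_X+\ulcorner H'\urcorner)=H^1(X,\mathcal{G}_n)=0$, which completes the argument.

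The main obstacle is the bookkeeping that ensures twisting by $nF$ genuinely produces an \emph{ample} divisor whose fractional part is unchanged and still simple normal crossing, so that the hypotheses of Theorem \ref{1.4} are met; once $H'$ is known to be ample, everything else is formal. One should also check that a single value of $n$ can be chosen that is simultaneously large enough for $H'$ to be ample and for $(R^1f_*\OO_X(K_X+\ulcorner H\urcorner))(n)$ to be globally generated, which is harmless since both conditions hold for all $n\gg 0$.
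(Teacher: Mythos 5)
Your proposal is correct and follows essentially the same route as the paper: twist by $nF$, use the Leray spectral sequence to map $H^1(X,K_X+\ulcorner H\urcorner+nF)$ onto $H^0(\PP^1,(R^1f_*\OO_X(K_X+\ulcorner H\urcorner))(n))$, kill that $H^1$ by Theorem \ref{1.4} applied to $H+nF$ with boundary $\{H\}$, and conclude by global generation for $n\gg 0$. The only differences are minor and in your favor: you use the five-term exact sequence (valid for every $n$) where the paper degenerates the spectral sequence via Serre vanishing, and you derive ampleness of $H+nF$ from the standard fact about $f$-ample divisors plus pullbacks of ample divisors, whereas the paper asserts that $H$ itself is ample from its embedding into $\PP(f_*\OO_X(mH))$ --- a claim stronger than what that embedding yields (the relevant $\OO(1)$ need not be ample) and one that your argument correctly shows is unnecessary.
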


\begin{proof}
By assumption, there exists an $m\in\N$ such that $mH$ is integral 
and the natural morphism $f^*f_*\OO_X(mH)\ra\OO_X(mH)$ is surjective, 
which induces a closed immersion $\varphi:X\ra\PP(f_*\OO_X(mH))$ with 
$mH=\varphi^*\OO(1)$. Therefore $H$ is ample on $X$.

Let $P$ be a general point in $\PP^1$, $F=f^{-1}(P)$ the general fibre 
of $f$, and $m$ a positive integer. Consider the Leray spectral sequence 
$E_2^{ij}=H^i(\PP^1,R^jf_*\OO_X(K_X+\ulcorner H\urcorner+mF))
\Rightarrow H^{i+j}(X,\OO_X(K_X+\ulcorner H\urcorner+mF))$. 
By Serre vanishing, $E_2^{ij}=0$ holds for any $i>0$ and any $m\gg 0$. 
Therefore we have $H^0(\PP^1,R^1f_*\OO_X(K_X+\ulcorner H\urcorner+mF))
=H^1(X,\OO_X(K_X+\ulcorner H\urcorner+mF))=0$ by Theorem \ref{1.4}. 
Note that $R^1f_*\OO_X(K_X+\ulcorner H\urcorner+mF)=
R^1f_*\OO_X(K_X+\ulcorner H\urcorner)\otimes\OO_{\PP^1}(m)$ 
is generated by global sections for $m\gg 0$, so we have 
$R^1f_*\OO_X(K_X+\ulcorner H\urcorner)=0$.
\end{proof}

\begin{proof}[Proof of Corollary \ref{1.9}]
(1) We proceed a similar argument to the proof of Theorem \ref{1.4} 
to obtain a log resolution $h:Y\ra X$ from a smooth projective 
rational surface $Y$, a divisor $D_Y$ and a $\Q$-divisor 
$B_Y$ on $Y$, such that $f\circ h$ is projective, $H_Y=D_Y-(K_Y+B_Y)$ 
is ample, $[B_Y]=0$ and $\Supp(B_Y)$ is simple normal crossing. 
Furthermore, we have $R^1h_*\OO_Y(D_Y)=0$ and 
$h_*\OO_Y(D_Y)=\OO_X(D)$.

Let $g=f\circ h:Y\ra\PP^1$ be the induced morphism. 
It follows from Corollary \ref{3.3} that $R^1f_*\OO_X(D)=
R^1g_*\OO_Y(D_Y)=R^1g_*\OO_Y(K_Y+\ulcorner H_Y\urcorner)=0$. 
By the Leray spectral sequence and Theorem \ref{1.4}, 
we have $H^1(\PP^1,f_*\OO_X(D))=H^1(X,D)=0$.

(2) Since $\OO_X(D)$ is torsion free, so is $f_*\OO_X(D)$. 
Hence $f_*\OO_X(D)$ is a locally free sheaf on $\PP^1$. 
By Grothendieck's theorem (cf.\ \cite{oss}), 
$f_*\OO_X(D)$ is a direct sum of invertible sheaves on $\PP^1$: 
$f_*\OO_X(D)=\OO_{\PP^1}(d_1)\oplus\cdots\oplus\OO_{\PP^1}(d_n).$
Note that
\begin{eqnarray}
H^1(X,D) & = & H^1(\PP^1,f_*\OO_X(D))
=\bigoplus_iH^1(\PP^1,\OO_{\PP^1}(d_i)), \nonumber \\
f_*\OO_X(D-f^*K_{\PP^1}) & = & f_*\OO_X(D)\otimes\omega_{\PP^1}^{-1}
=\bigoplus_i\OO_{\PP^1}(d_i+2), \nonumber
\end{eqnarray}
so the vanishing of $H^1(X,D)$ implies the ampleness of 
$f_*\OO_X(D-f^*K_{\PP^1})$.
\end{proof}

\section{Some remarks on the main results}\label{S4}

First of all, we recall the following criterion for the liftability 
of log pairs.

\begin{lem}\label{4.1}
Let $X$ be a smooth variety, and $D$ a simple normal crossing divisor on $X$. 
Then there is an obstruction 
$o(X,D)\in\ext_{\OO_X}^2(\Omega_X^1(\log D),\OO_X)=H^2(X,\TT_X(-\log D))$ 
to the liftability of $(X,D)$ to $W_2(k)$, i.e.\ $o(X,D)=0$ if and only if 
$(X,D)$ is liftable to $W_2(k)$.
\end{lem}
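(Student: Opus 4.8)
The plan is to derive $o(X,D)$ by the standard local-to-global deformation theory for the square-zero extension $0\ra k\ra W_2(k)\ra k\ra 0$, whose kernel $(p)\cong k$ satisfies $(p)^2=0$ (this is exactly the extension $0\ra\OO_X\ra\OO_{\wt X}\ra\OO_X\ra 0$ recalled after Definition \ref{2.1}). First I would dispose of the identification of groups: since $D$ is simple normal crossing, $\Omega_X^1(\log D)$ is locally free, so the sheaves $\mathcal{E}xt^j_{\OO_X}(\Omega_X^1(\log D),\OO_X)$ vanish for $j>0$, and the local-to-global Ext spectral sequence degenerates to give $\ext^i_{\OO_X}(\Omega_X^1(\log D),\OO_X)\cong H^i(X,\TT_X(-\log D))$, where $\TT_X(-\log D)=\mathcal{H}om(\Omega_X^1(\log D),\OO_X)$ is the sheaf of log derivations. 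This reduces the statement to producing a canonical obstruction class in $H^2(X,\TT_X(-\log D))$.

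Next I would construct the obstruction cocycle. Choosing an affine open cover $\{U_\alpha\}$ on which $X$ is étale over $\A^n_k$ with $D$ cut out by $x_1\cdots x_\nu=0$, each $(U_\alpha,D|_{U_\alpha})$ admits the local lifting $(\A^n_{W_2(k)},\{x_1\cdots x_\nu=0\})$ pulled back along the étale map, so local liftings always exist. On each double overlap $U_{\alpha\beta}$ the two restricted liftings are both liftings of the same smooth affine log pair; any two such are (non-canonically) isomorphic, so I may pick an isomorphism $\theta_{\alpha\beta}$ reducing to the identity modulo $p$. On triple overlaps the automorphism $\theta_{\alpha\gamma}^{-1}\theta_{\alpha\beta}\theta_{\beta\gamma}$ of a lifting, being the identity modulo $p$, is given by a section $\delta_{\alpha\beta\gamma}\in\Gamma(U_{\alpha\beta\gamma},\TT_X(-\log D))$; these assemble into a \v{C}ech $2$-cocycle whose class $o(X,D)\in H^2(X,\TT_X(-\log D))$ I take to be the obstruction.

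Then I would verify the equivalence. If $o(X,D)=0$, the cocycle $(\delta_{\alpha\beta\gamma})$ is a coboundary, so after modifying each $\theta_{\alpha\beta}$ by the corresponding log derivation the adjusted isomorphisms satisfy the cocycle condition on triple overlaps, and the local liftings glue to a global lifting $(\wt X,\wt D)$ of $(X,D)$ to $W_2(k)$. Conversely, a global lifting restricts to compatible local liftings with canonical identifications, forcing the cocycle to be trivial. A routine check, using the torsor structure of the set of isomorphisms of two fixed liftings under $\Gamma(\TT_X(-\log D))$, shows the class is independent of the cover and of the choices of $\theta_{\alpha\beta}$.

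The main obstacle---and the only place where the logarithmic structure genuinely enters---is the identification in the triple-overlap step of the relevant infinitesimal automorphisms with \emph{log} derivations rather than ordinary ones. The point is that an isomorphism of liftings of the pair $(X,D)$ must carry $\wt D$ to $\wt D$, hence its underlying derivation $\partial$ must preserve each ideal $I_{\wt{D_i}}$, that is, satisfy $\partial(I_{D_i})\subseteq I_{D_i}$; in the local coordinates above this is exactly the condition defining $\TT_X(-\log D)$, whose stalk is the free $\OO_X$-module on $x_1\partial_{x_1},\dots,x_\nu\partial_{x_\nu},\partial_{x_{\nu+1}},\dots,\partial_{x_n}$. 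Pinning down this compatibility carefully, and checking that the divisor-preserving automorphisms form a torsor under precisely the log tangent sheaf, is what makes the obstruction land in $H^2(X,\TT_X(-\log D))$ rather than in $H^2(X,\TT_X)$; the rest of the argument is formal.
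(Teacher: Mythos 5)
Your proof is correct and is essentially the paper's own argument made explicit: the paper cites \cite[Proposition 8.22]{ev} for precisely your key point---that isomorphisms of liftings form a torsor under $\Hom_{\OO_X}(\Omega_X^1(\log D),\OO_X)=\TT_X(-\log D)$, with the log condition entering exactly as you describe---and then invokes ``a similar argument to \cite[Proposition 2.12]{il}'', which is the \v{C}ech cocycle construction and gluing verification you wrote out in full. (The paper additionally sketches an alternative proof via the gerbe equivalence of \cite[4.2.3]{di}, but its primary route coincides with yours.)
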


\begin{proof}
The case when $D=\emptyset$ was verified directly in 
\cite[Proposition 2.12]{il}. For the general case, 
\cite[Proposition 8.22]{ev} just showed that the isomorphisms of liftings 
of $(X,D)$ over an open subset form a ``torseur'' under the group 
$\Hom_{\OO_X}(\Omega_X^1(\log D),\OO_X)$, hence by a similar argument to 
that of \cite[Proposition 2.12]{il}, we get the required obstruction 
$o(X,D)\in\ext_{\OO_X}^2(\Omega_X^1(\log D),\OO_X)=H^2(X,\TT_X(-\log D))$. 
An alternative proof follows from a deep result in \cite[4.2.3]{di}: 
the ``gerbe'' $\mathrm{rel}(X,D,W_2(k))$ of liftings of $(X,D)$ is 
canonically equivalent to the ``gerbe'' 
$\mathrm{sc}(\tau_{\leq 1}F_*\Omega_X^\bullet(\log D))$ of splittings of 
the complex $\tau_{\leq 1}F_*\Omega_X^\bullet(\log D)$. Hence we have 
$o(X,D)=\mathrm{cl\,\,sc}(\tau_{\leq 1}F_*\Omega_X^\bullet(\log D))
\in\ext_{\OO_X}^2(\Omega_X^1(\log D),\OO_X)$ by \cite[3.2]{di}.
\end{proof}

Abelian varieties and complete intersections in $\PP_k^n$ are liftable 
to $W_2(k)$, which are nontrivial results of Grothendieck and Deligne 
(see \cite[7.11]{il}). On the other hand, from Lemma \ref{4.1}, 
it follows easily that any smooth projective curve (or any log pair on it) 
is liftable to $W_2(k)$. Furthermore, we have the following consequence:

\begin{cor}\label{4.2}
Let $X$ be a smooth projective surface with $\kappa(X)<0$. 
Then $X$ is liftable to $W_2(k)$.
\end{cor}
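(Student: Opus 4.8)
The plan is to combine the obstruction-theoretic criterion of Lemma \ref{4.1} with the classification of surfaces and the blow-up step of Lemma \ref{2.4}. Since $\kappa(X)<0$, the Enriques--Kodaira classification of surfaces, which holds in arbitrary characteristic, shows that $X$ is birationally ruled; running the minimal model program, there is a birational morphism $f:X\ra X_0$ that is a composition of $(-1)$-curve contractions, where the relatively minimal model $X_0$ is either $\PP^2_k$ or a geometrically ruled surface $\pi:X_0=\PP(\EE)\ra C$ over a smooth projective curve $C$. My strategy is to prove that $X_0$ is liftable to $W_2(k)$ (together with any closed point), and then to propagate the lifting through the finitely many blow-ups constituting $f$, exactly as in the proof of Proposition \ref{2.6}.

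For the base case, $\PP^2_k$ is liftable, so assume $X_0=\PP(\EE)\ra C$. By Lemma \ref{4.1} it suffices to show that the obstruction space $H^2(X_0,\TT_{X_0})$ vanishes. I would use the relative tangent sequence
\[ 0\ra \TT_{X_0/C}\ra \TT_{X_0}\ra \pi^*\TT_C\ra 0 \]
together with the Leray spectral sequence for $\pi$. Since the fibres of $\pi$ are copies of $\PP^1$, we have $R^1\pi_*\OO_{X_0}=0$ and $R^1\pi_*\TT_{X_0/C}=0$ (as $\TT_{X_0/C}$ restricts to $\OO_{\PP^1}(2)$ on each fibre), so $H^2(X_0,\pi^*\TT_C)=H^2(C,\TT_C)$ and $H^2(X_0,\TT_{X_0/C})=H^2(C,\pi_*\TT_{X_0/C})$. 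Both of these vanish because $C$ is a curve and hence has cohomological dimension one. Therefore $H^2(X_0,\TT_{X_0})=0$ and $X_0$ is liftable to $W_2(k)$. Moreover, since $X_0$ is smooth over $k$, any closed point $P\in X_0$ lifts to a point of the $W_2(k)$-lifting by lifting local coordinates, so $(X_0,P)$ admits a mixed lifting.

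Finally, I would propagate this through the blow-up sequence. Writing $f$ as a composition $X=X_n\ra X_{n-1}\ra\cdots\ra X_0$ of one-point blow-ups, I proceed by induction, keeping track of the reduced divisor $D_i$ on $X_i$ consisting of the strict transforms of the exceptional curves produced so far. Given a mixed lifting of $(X_i,D_i)$, one lifts the next centre $P_i\in X_i$ by lifting local coordinates to obtain a mixed lifting of $(X_i,D_i+P_i)$, and then Lemma \ref{2.4} yields a mixed lifting of $(X_{i+1},D_{i+1})$. After $n$ steps this produces a mixed lifting of $X$ together with a divisor, and in particular a lifting of $X$ to $W_2(k)$. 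The main obstacle is the base case: unlike Proposition \ref{2.6}, where $X_0$ is always a Hirzebruch surface with an explicit lifting, here $X_0$ may be ruled over a curve of positive genus, and the substantive point is the vanishing $H^2(X_0,\TT_{X_0})=0$, which is what forces us to pass through the obstruction criterion of Lemma \ref{4.1} rather than exhibiting an explicit lift.
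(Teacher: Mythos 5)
Your proposal is essentially correct, but it takes a genuinely different route from the paper's. The paper never passes to the minimal model: it applies Lemma \ref{4.1} with $D=\emptyset$ directly on $X$, notes that by Serre duality the obstruction space $H^2(X,\TT_X)$ is dual to $H^0(X,\Omega_X^1\otimes\omega_X)$, and kills the latter by restriction to a general fibre $F\cong\PP^1$ of a ruling $f:X\ra C$ (which exists for any such $X\not\cong\PP^2_k$): tensoring the conormal sequence $0\ra\OO_F\ra\Omega_X^1|_F\ra\omega_F\ra 0$ with $\omega_X|_F\cong\OO_F(-2)$ gives $0\ra\OO_F(-2)\ra\Omega_X^1\otimes\omega_X|_F\ra\OO_F(-4)\ra 0$, so a nonzero global section would restrict on $F$ to a nonzero section of a sheaf with no sections. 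This is shorter and needs no induction; in particular it sidesteps the fact that vanishing of $H^2$ of the tangent sheaf is not obviously preserved under blow-up, which is exactly why your route must propagate actual liftings (via Lemma \ref{2.4}) rather than obstruction-space computations. Your base case is correct: the Leray argument with the relative tangent sequence does give $H^2(X_0,\TT_{X_0})=0$ for a geometrically ruled surface (alternatively, one can lift $C$ and the rank-two bundle $\EE$ directly, since the relevant obstructions live in $H^2$ of a curve, and then take $\PP(\wt{\EE})$).

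One caveat on your propagation step. For this corollary only the lifting of $X$ itself is needed, so you should discard the divisors $D_i$ entirely and apply Lemma \ref{2.4} with $D=\emptyset$ to $(X_i,P_i)$; then every step is justified by smoothness of $\wt{X_i}$ over $W_2(k)$. Carrying the strict transforms of the exceptional curves, as you propose, raises a compatibility issue that lifting local coordinates does not address: when the centre $P_i$ lies on $D_i$ (which happens whenever the blow-ups are at infinitely near points), an arbitrary section $\wt{P_i}$ through $P_i$ need not be contained in $\wt{D_i}$, and then the strict transform $\wt{\pi}_*^{-1}\wt{D_i}$ can fail to be flat over $W_2(k)$ and fail to reduce to the strict transform of $D_i$. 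For instance, take $\wt{D}=\{y=0\}$ and $\wt{P}=\{x=0,\,y=p\}$ in $\A^2_{W_2(k)}$: the strict transform of $\wt{D}$ in the blow-up along $\wt{P}$ acquires $p$-torsion, so it is not flat. This is precisely why the proof of Proposition \ref{2.6} takes care, via the immersion $\eta$, to choose the lifted centre on the lifted divisor. Since the divisors are irrelevant to the statement being proved, this is an unnecessary complication rather than a fatal gap, but as written that step of your induction is not justified.
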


\begin{proof}
If $X\cong\PP^2_k$ then the conclusion is obvious. So we may assume that 
there is a fibration $f:X\ra C$ over a smooth projective curve $C$ with 
a general fiber $F\cong\PP^1$. By Lemma \ref{4.1} and Serre duality, it 
suffices to show $H^0(X,\Omega_X^1\otimes\omega_X)=0$. Suppose to the 
contrary that $H^0(X,\Omega_X^1\otimes\omega_X)\neq 0$ holds, then we can 
take a section $0\neq s\in H^0(X,\Omega_X^1\otimes\omega_X)$ such that 
$0\neq s|_F\in H^0(F,\Omega_X^1\otimes\omega_X|_F)$. Let $\II=\OO_X(-F)$ be 
the ideal sheaf of $F$ in $X$. Then we have $\II/\II^2=\OO_X(-F)|_F\cong\OO_F$. 
By adjunction formula, we have $\omega_X|_F\cong\omega_F\cong\OO_F(-2)$. 
Tensoring the following exact sequence with $\omega_X|_F$:
\[ 0\ra \II/\II^2\ra \Omega_X^1|_F\ra \omega_F\ra 0, \]
we have the exact sequence:
\[ 0\ra \OO_F(-2)\ra \Omega_X^1\otimes\omega_X|_F\ra \OO_F(-4)\ra 0. \]
Taking the long exact sequence of cohomology groups, we have 
$H^0(F,\Omega_X^1\otimes\omega_X|_F)=0$, which is a contradiction.
\end{proof}

The main technical result in this paper is Proposition \ref{2.6}, 
which shows that certain log pairs on smooth rational surfaces are 
liftable to $W_2(k)$. The proof of Proposition \ref{2.6} is proceeded 
by induction via Lemma \ref{2.4}, however the initial step, 
i.e.\ Lemma \ref{2.3}, is proved by a argument, 
which depends on the geometric properties of Hirzebruch surfaces. 
Therefore it seems impossible to generalize Proposition \ref{2.6} 
to general surfaces. In fact, Proposition \ref{2.6} fails even for 
certain ruled surfaces, which is described in Corollary \ref{1.10}, 
since there exist counterexamples to the Kawamata-Viehweg vanishing 
on those ruled surfaces.

\begin{proof}[Proof of Corollary \ref{1.10}]
We use the same notation and construction as in \cite[Theorem 3.1]{xie07}. 
Therefore, there are a $\PP^1$-bundle $f:X\ra C$ and an ample $\Q$-divisor 
$H$ on $X$ with $\Supp(\langle H\rangle)=C'$ and $H^1(X,K_X+\ulcorner H
\urcorner)\neq 0$, where $C'\subset X$ is a smooth curve and 
$f|_{C'}:C'\ra C$ is the $k$-linear Frobenius morphism. 
By Theorem \ref{1.3}, $(X,C')$ cannot be lifted to $W_2(k)$.
\end{proof}

Note that Corollary \ref{1.10} means 
$0\neq o(X,C')\in H^2(X,\TT_X(-\log C'))$, 
while the $\PP^1$-bundle $X$ itself is liftable 
to $W_2(k)$ by Corollary \ref{4.2}.

Finally, we give some remarks on Theorem \ref{1.4}.

\begin{rem}\label{4.3}
(1) By a standard argument via Kodaira's lemma, Theorem \ref{1.4} gives 
rise to the Kawamata-Viehweg vanishing theorem for nef and big $\Q$-divisors 
on smooth rational surfaces, which may be useful in practice.
\vskip 2mm
($\dag$) Let $X$ be a smooth proper rational surface, and $L$ 
a nef and big $\Q$-divisor on $X$, such that the fractional part 
$\langle L\rangle$ has simple normal crossing support. 
Then $H^1(X,K_X+\ulcorner L\urcorner)=0$ holds.
\vskip 2mm

(2) The following Kodaira-Ramanujam vanishing theorem \cite{ra} 
is a special case of the Kawamata-Viehweg vanishing theorem for 
nef and big integral divisors on smooth surfaces.
\vskip 2mm
($\ddag$) Let $X$ be a smooth projective surface over an algebraically 
closed field $k$ with $\ch(k)=0$, and $L$ a nef and big integral divisor 
on $X$. Then $H^1(X,K_X+L)=0$ holds.
\vskip 2mm
By a result of Raynaud \cite[Corollaire 2.8]{di} and Corollary \ref{4.2}, 
the Kodaira-Ramanujam vanishing theorem holds on all smooth projective 
surfaces with negative Kodaira dimension in positive characteristic, 
while among those surfaces, there exist counterexamples to the 
Kawamata-Viehweg vanishing theorem for nef and big $\Q$-divisors 
(see \cite[Theorem 3.1]{xie07}). This observation shows that there 
is a significant difference between the $\Q$-divisor version and the 
integral divisor version of the Kawamata-Viehweg vanishing theorem 
in positive characteristic.
\end{rem}

\small

\textsc{School of Mathematical Sciences, Fudan University, 
Shanghai 200433, China}

\textit{E-mail address}: \texttt{xie\_qihong@hotmail.com, qhxie@fudan.edu.cn}

\end{document}